\newcommand{\rrVert}{\Vert}
\newcommand{\rrvert}{\vert}
\newcommand{\llVert}{\Vert}
\newcommand{\llvert}{\vert}
\newcommand{\eqref}[1]{(\ref{#1})}
\newtheorem{theorem}{Theorem}[section]
\newtheorem{conjecture}[theorem]{Conjecture}
\newtheorem{question}[theorem]{Question}
\newtheorem{lemma}[theorem]{Lemma}
\newtheorem{cor}[theorem]{Corollary}
\def\pr{\mathbb{P}}
\renewcommand{\epsilon}{\varepsilon}
\newcommand{\eps}{\varepsilon}
\newcommand{\1}{\mathbf{1}}
\newcommand{\Lip}{\mathrm{Lip}}
\newcommand{\orbit}{\setminus}
\newcommand{\Aut}{\operatorname{\mathsf{Aut}}}
\newcommand{\dist}{\operatorname{\mathsf{dist}}}
\renewcommand{\dim}{\operatorname{\mathsf{dim}}}
\newcommand{\E}{\mathbb{E}}
\newcommand{\remove}[1]{}
\renewcommand{\le}{\leq}
\renewcommand{\ge}{\geq}
\begin{document}
\begin{frontmatter}

\title{Harmonic maps on amenable groups and a~diffusive lower bound
for random walks}
\runtitle{Harmonic maps and random walks}

\begin{aug}
\author[A]{\fnms{James R.} \snm{Lee}\corref{}\thanksref{t1}\ead[label=e1]{jrl@cs.washington.edu}}
\and
\author[B]{\fnms{Yuval} \snm{Peres}\ead[label=e2]{peres@microsoft.com}}
\thankstext{t1}{Supported by NSF Grant CCF-0644037 and a
Sloan Research Fellowship.}
\runauthor{J. R. Lee and Y. Peres}
\affiliation{University of Washington and Microsoft Research}
\address[A]{Department of Computer Science and Engineering\\
Box 352350\\
University of Washington\\
Seattle, Washington 98195-2350\\
USA\\
\printead{e1}} 
\address[B]{Microsoft Research\\
One Microsoft Way\\
Washington, 98052\\
USA\\
\printead{e2}}
\end{aug}

\received{\smonth{9} \syear{2011}}
\revised{\smonth{5} \syear{2012}}

%
\begin{abstract}
We prove diffusive lower bounds on the rate of escape of the random walk
on infinite transitive graphs.
Similar estimates hold for finite graphs, up to the relaxation time
of the walk. Our approach uses nonconstant equivariant
harmonic mappings taking values in a Hilbert space.
For the special case of discrete, amenable groups,
we present a more explicit proof
of the Mok--Korevaar--Schoen theorem on the existence of such harmonic
maps by
constructing them from the heat flow on a F{\o}lner set.
\end{abstract}

%
\begin{keyword}[class=AMS]
\kwd{20F65}
\kwd{60J45}
\kwd{60G42}
\kwd{60B15}
\end{keyword}
\begin{keyword}
\kwd{Random walks on groups}
\kwd{rate of escape}
\kwd{harmonic maps}
\end{keyword}

\end{frontmatter}

\section{Introduction}
\label{secintro}

Let $G$ be a $d$-regular, transitive graph (i.e., with transitive
automorphism group),
let $\{X_t\}$ denote the symmetric simple random walk on $G$ with $X_0$
arbitrary
and let $\dist$ be the path metric on $G$.
In the case when $G$ is a Cayley graph of a finitely-generated,
amenable group, \`{E}rshler~\cite{Erschler08} showed that
$\mathbb E [\dist(X_0,X_t)^2 ] \geq C t/d$ for all times $t
\geq1$, where
$C > 0$ is some absolute constant.

Our first theorem concerns a more precise analysis of the random walk behavior,
as well as an extension to general transitive, amenable graphs. Recall that
a graph~$G$ is \textit{amenable} if there exists a sequence
of finite subsets $\{S_j\}$ of the vertices such that $|S_j \triangle
N(S_j)|/|S_j| \to0$,
where $N(S_j)$ denotes the neighborhood of $S_j$ in~$G$.

\begin{theorem}\label{thmmaininfinite}
Suppose $G$ is an infinite, connected, and amenable transitive
$d$-regular graph. Then the simple random walk on $G$ satisfies the estimate
\[
\E \bigl[\dist(X_0,X_t)^2 \bigr] \geq t/d .
\]
Moreover, for some universal constants $C > 0$ and $C' \geq1$, and $t
\geq d$, we have the estimates
\[
\E \bigl[\dist(X_0, X_t) \bigr] \geq C \sqrt{t/d},
\]
and for every $\varepsilon\geq1/\sqrt{t}$,
\[
\frac{1}{t} \sum_{s=0}^t \pr \bigl[
\dist(X_0, X_s) \leq\varepsilon \sqrt{t/d} \bigr] \leq
C' \varepsilon .
\]
\end{theorem}

In Section~\ref{secapplications},
we prove a version of the preceding theorem
for the Cayley graph of any group without property (T).

In various senses, Theorem~\ref{thmmaininfinite} shows that among
infinite transitive graphs, the random walk
disperses slowest for the standard random walk on $\mathbb Z$; see
Corollary~\ref{coramen} and Remark~\ref{remasymptotic}.
We also prove a version for finite graphs which holds up to the relaxation
time of the random walk. In this case, the bound is matched (up to
constant factors) for the finite cycle graphs; see Remark~\ref{remweighted}.

\begin{theorem}\label{thmmainfinite}
Suppose $G$ is a finite, connected, transitive $d$-regular graph, and
$\lambda$ denotes the second-largest
eigenvalue of the transition matrix $P$ of the random walk on $G$.
Then for every $t \leq(1-\lambda)^{-1}$,
\[
\E \bigl[\dist(X_0,X_t)^2 \bigr] \geq
t/(2d) .
\]
Moreover, for some universal constants $C > 0$ and $C' \geq1$, and all
$t$ such that $(1-\lambda)^{-1} \geq t \geq d$, we have the estimates
\[
\E \bigl[\dist(X_0, X_t) \bigr] \geq C \sqrt{t/d},
\]
and for every $\varepsilon\geq1/\sqrt{t}$,
\[
\frac{1}{t} \sum_{s=0}^t \pr \bigl[
\dist(X_0, X_s) \leq\varepsilon \sqrt{t/d} \bigr] \leq
C' \varepsilon .
\]
\end{theorem}

We remark that, in both cases, the dependence on $d$ is necessary; see
Remark~\ref{remweighted}.

%

The proof of Theorem~\ref{thmmaininfinite} is based on the existence
of nonconstant,
equivariant harmonic maps on transitive, amenable graphs.
For the simplicity of presentation, we first restrict ourselves to the
setting of groups.
Let $\Gamma$ be a group with finite generating set $S \subseteq\Gamma$,
and let $G$ be the corresponding Cayley graph. Here and throughout the paper,
all Cayley graphs will be defined using multiplication by the
generators on the right.

Suppose that $\mathcal H$ is some Hilbert space
on which $\Gamma$ acts by isometries, and we have a nonconstant equivariant
harmonic map $\Psi: \Gamma\to\mathcal H$, that is, such that $g \Psi
(h) = \Psi(gh)$
and $\Psi(h) = |S|^{-1} \sum_{s \in S} \Psi(hs)$ hold for every $h
\in\Gamma$.
\`{E}rshler~\cite{Erschler08} observed that this can be used
to lower bound $\E [\dist(X_0,X_t)^2 ]$, as follows.

We may normalize $\Psi$ so that, if $e \in\Gamma$ is the identity,
%
\begin{equation}
\label{eqnormalize} \frac{1}{|S|} \sum_{s \in S} \bigl\|
\Psi(e)-\Psi(s)\bigr\| ^2=1.
\end{equation}

By equivariance, this implies that $\Psi$ is $\sqrt{|S|}$-Lipschitz, hence
\[
\E \bigl[\dist(X_0,X_t)^2 \bigr] \geq
\frac{1}{|S|} \E \bigl\| \Psi (X_0)-\Psi(X_t)
\bigr\| ^2.
\]
But since $\Psi$ is harmonic, $\Psi(X_t)$ is a martingale, thus
\[
\E \bigl\| \Psi(X_0)-\Psi(X_t)\bigr\| ^2 = \sum
_{j=0}^{t-1} \E\bigl \| \Psi (X_j)-
\Psi(X_{j+1})\bigr\| ^2 = t,
\]
where in the final line we have used equivariance and \eqref{eqnormalize}.

By results of Mok~\cite{Mok95} and Korevaar and Schoen~\cite{KS97},
if $\Gamma$ is amenable,
then it always admits such an equivariant harmonic map. On the other hand,
if $\Gamma$ is not amenable, then $G$ has spectral radius $\rho< 1$
\cite{Kesten59},
hence $\E[\dist(X_0,X_t)^2] \geq C t^2$,
for some constant $C=C(\rho) > 0$; see, for example,~\cite{Woess00}, Proposition 8.2.
Thus the preceding discussion shows that $\E[\dist(X_0,X_t)^2]$ grows
at least linearly
in $t$, for any infinite group $\Gamma$.

In Section~\ref{secescape}, we exhibit a general method for proving
escape lower bounds. For any function $\psi\in\ell^2(\Gamma)$, we have
%
\begin{equation}
\E \bigl[\dist(X_0,X_t)^2 \bigr] \geq
\frac{1}{d} \biggl(t - t^2 \frac{\| (I-P) \psi\| ^2}{2\langle\psi, (I-P) \psi\rangle} \biggr),
\end{equation}
where $P$ is the transition kernel of the random walk on $G$.
For finite groups, we choose $\psi$ to be the eigenfunction corresponding
to the second-largest eigenvalue of $P$.
For infinite amenable groups, one can obtain $\psi$ directly
from spectral projection.

For a more explicit approach
in the infinite, amenable case, we show that one can obtain the $\E
[\dist(X_0,X_t)^2] \geq t/|S|$ bound
by taking a sequence of functions $\{\psi_n\}$ to be a truncated heat
flow from some sets $A_n \subseteq\Gamma$,
that is, $\psi_n = \sum_{i=0}^{n} P^i \1_{A_n}$, where $\{A_n\}$ forms an
appropriate F{\o}lner sequence in $G$.
These lower bounds, and indeed all the results in our paper,
are proved for amenable, transitive graphs (and even quasi-transitive graphs),
and more general forms of stochastic processes.

The existence of nonconstant, equivariant, harmonic maps on groups
without property (T) is proved in~\cite{Mok95,KS97}; see also~\cite{Kleiner07}, Appendix A, for an exposition in the case of discrete
groups, based on~\cite{FM05}.
In Section~\ref{secharmonic}, inspired by the preceding escape lower
bounds, we give an explicit construction
of these harmonic maps, simple enough to describe here. We focus now
on the amenable case; in Theorem~\ref{thmharmonicNPT}, we show that
this approach generalizes to all discrete groups without property~(T).

Define $\Psi_n \dvtx \Gamma\to\ell^2(\Gamma)$ by
\[
\Psi_n(x) \dvtx g \mapsto\frac{\psi_n(gx)}{\sqrt{2\langle\psi_n,
(I-P)\psi_n\rangle}} ,
\]
where $\psi_n$ is as before.

We argue that, after applying an appropriate affine isometry to each
$\Psi_n$,
there is a subsequence of $\{\Psi_n\}$ which converges pointwise
to a nonconstant, equivariant, harmonic map.
Our construction works for any infinite, transitive, amenable graph;
see Theorem~\ref{thmharmonic}.

\begin{theorem}
Let $G=(V,E)$ be any infinite, connected, amenable, transitive graph. Then
there exists a Hilbert space $\mathcal H$
and an $\mathcal H$-valued,
nonconstant equivariant harmonic mapping on $G$, where equivariance
is understood with respect to the transitive action on $G$.
\end{theorem}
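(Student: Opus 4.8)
The plan is to obtain the harmonic map as a pointwise limit of the explicit near-harmonic equivariant maps $\Psi_n$ introduced before the theorem, passing to the associated squared-distance kernels so as to sidestep the failure of compactness for balls in a Hilbert space. Fix a base vertex $o\in V$ and a subgroup $\Gamma\leq\Aut(G)$ acting transitively on $V$. Since a function is harmonic for the simple random walk exactly when it is harmonic for the lazy walk, we may replace $P$ by $\tfrac12(I+P)$ and assume $P$ is a positive contraction on $\ell^2(V)$.

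The first task is to record the relevant properties of $\Psi_n$. For an ``appropriate'' F\o lner sequence $\{A_n\}$ in $G$ and truncation lengths $t_n$, set $\psi_n=\sum_{i=0}^{t_n}P^i\1_{A_n}$ and let $\Psi_n\colon V\to\ell^2(V)$ be the equivariant map of the excerpt. A direct computation (the Dirichlet identity for $I-P$) shows that $\Psi_n$ is equivariant, that $\tfrac1d\sum_{v\sim o}\|\Psi_n(o)-\Psi_n(v)\|^2=1$, and that the harmonic defect of $\Psi_n$ at $o$ is
\[
\eta_n\deq\Bigl\|\Psi_n(o)-\tfrac1d\sum_{v\sim o}\Psi_n(v)\Bigr\|^2=\frac{\|(I-P)\psi_n\|^2}{2\langle\psi_n,(I-P)\psi_n\rangle}.
\]
Equivariance together with the normalization also forces every edge of $G$ to have $\Psi_n$-length at most $\sqrt d$, so each $\Psi_n$ is $\sqrt d$-Lipschitz on $(V,\dist)$, uniformly in $n$. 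Since $(I-P)\psi_n=\1_{A_n}-P^{t_n+1}\1_{A_n}$ we get $\|(I-P)\psi_n\|^2\leq 4|A_n|$, while $\langle\psi_n,(I-P)\psi_n\rangle$ is the Dirichlet energy of the truncated heat flow. The crucial point --- and the step I expect to be the main obstacle --- is that $\eta_n\to 0$, equivalently that this Dirichlet energy dominates $|A_n|$. Spectrally, if $\nu_n$ denotes the spectral measure of $\1_{A_n}$ with respect to $I-P$ and $g_n(s)=\sum_{i=0}^{t_n}(1-s)^i$, then $\eta_n=\frac12\cdot\frac{\int s^2g_n(s)^2\,d\nu_n}{\int s\,g_n(s)^2\,d\nu_n}$, and the weight $s\,g_n(s)^2$ concentrates the relevant mass at the spectral scale $s\approx 1/t_n$; establishing that $\nu_n$ carries enough mass there --- so that $\eta_n=O(1/t_n)\to 0$ --- is exactly what the escape-bound analysis of Section~\ref{sec:escape} provides, once $\{A_n\}$ and $\{t_n\}$ are chosen in the right relation (essentially, $\{A_n\}$ should be F\o lner at a scale growing faster than $\sqrt{t_n}$). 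Note also that for a general transitive graph the point stabilizers in $\Aut(G)$ may be infinite, so there is no uniform concrete model Hilbert space; this causes no difficulty because we pass to the limit only at the level of kernels.

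Granting the near-harmonic maps $\Psi_n$, the remainder is soft. Put $D_n(u,v)=\|\Psi_n(u)-\Psi_n(v)\|^2$; the uniform Lipschitz bound gives $D_n(u,v)\leq d\,\dist(u,v)^2$ for all $n$. As $V\times V$ is countable, a diagonal argument yields a subsequence along which $D_n(u,v)\to D(u,v)$ for every pair $(u,v)$. Each $D_n$ is a kernel of negative type, and the inequalities defining negative type involve only finitely many entries at a time, so $D$ is again of negative type; moreover $D$ is $\Gamma$-invariant since each $D_n$ is (the $\Gamma$-action is isometric and $\Psi_n$ is equivariant). By Schoenberg's theorem there are a Hilbert space $\mathcal H$ and a map $\Psi\colon V\to\mathcal H$ with $\|\Psi(u)-\Psi(v)\|^2=D(u,v)$; taking $\mathcal H$ to be the closed linear span of $\{\Psi(v)-\Psi(o):v\in V\}$ and using that $\Gamma$-invariance of $D$ makes the assignment $\Psi(v)-\Psi(o)\mapsto\Psi(\gamma v)-\Psi(\gamma o)$ extend to a linear isometry, $\Gamma$ acts on $\mathcal H$ by affine isometries in such a way that $\Psi$ is equivariant --- this is the standard correspondence between kernels of negative type and affine isometric actions.

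Finally, both required properties survive the limit because each is a fixed expression in the kernel values. Harmonicity at $o$: $\|\Psi(o)-\tfrac1d\sum_{v\sim o}\Psi(v)\|^2=\tfrac1{2d^2}\sum_{u,v\sim o}\bigl(D(o,u)+D(o,v)-D(u,v)\bigr)$, which equals $\lim_n$ of the same expression in $D_n$, namely $\lim_n\eta_n=0$; by equivariance $\Psi$ is then harmonic at every vertex. Non-constancy: $\tfrac1d\sum_{v\sim o}D(o,v)=\lim_n\tfrac1d\sum_{v\sim o}D_n(o,v)=1\neq 0$, so $\Psi$ is not constant. This produces a Hilbert space $\mathcal H$ and a non-constant equivariant harmonic map $\Psi\colon V\to\mathcal H$, as required.
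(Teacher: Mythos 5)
Your proposal is correct, and after the same set-up (the truncated heat-flow functions $\psi_n$, the explicit equivariant maps $\Psi_n$, and the estimates showing normalized Dirichlet energy $1$ and harmonic defect $\eta_n\to 0$, all of which are also what the paper does via Corollary \ref{cor:infpsi}), it diverges from the paper's proof at exactly one point: the passage to the limit. The paper proves a general lemma (Lemma \ref{lem:gencon}) that pre-composes each $\Psi_j$ with an affine isometry $T_j$ chosen so that the first $j$ image points land in a fixed increasing chain of finite-dimensional subspaces; this restores pointwise relative compactness, and after a diagonal subsequence one obtains $\widehat\Psi=\lim T_{\alpha_j}\Psi_{\alpha_j}$ together with a new affine action against which it is equivariant. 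You instead discard the maps and pass to the squared-distance kernels $D_n(u,v)=\|\Psi_n(u)-\Psi_n(v)\|^2$: these are uniformly bounded pointwise by the Lipschitz estimate, so a diagonal argument gives a pointwise limit kernel $D$; negative type and $\Gamma$-invariance are preserved because they are defined by finitely-indexed identities; Schoenberg's theorem and the standard GNS-type construction of the associated affine isometric action recover $\mathcal H$, $\Psi$, and the equivariance. Both routes are sound. Your kernel approach is arguably cleaner and more modular (it avoids choosing the isometries $T_j$ by hand), and the final verification of harmonicity via the identity $\bigl\|\Psi(o)-\tfrac1d\sum_{v\sim o}\Psi(v)\bigr\|^2=\tfrac1{2d^2}\sum_{u,v\sim o}\bigl(D(o,u)+D(o,v)-D(u,v)\bigr)$ is a nice touch that works because the harmonic defect is a finite polarization expression in $D$. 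The paper's Lemma \ref{lem:gencon} has the advantage of not requiring any homogeneity of the kernel and is reused verbatim in the property-(T) case (Theorem \ref{thm:harmonicNPT}); your route, as written, leans on transitivity to get the invariant kernel and hence the affine action, though it could be adapted.

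One point you should make precise rather than gesture at: the assertion $\eta_n\to 0$ is not automatic for an arbitrary pairing of F\o lner sets $A_n$ with truncation lengths $t_n$, and the spectral sketch you give does not by itself pin down the choice. What the paper actually proves (Lemma \ref{lem:genQ}) is that for a normalized $f$ with $\|Pf-f\|\le\theta$ and $\tfrac1k\sum_{i<k}\langle P^i f,f\rangle\to 0$, there exists \emph{some} dyadic truncation length $k=2^m$ with $\|(I-P)\varphi_k\|^2/\langle\varphi_k,(I-P)\varphi_k\rangle\le 32\theta$; it is an existence statement over $k$, obtained from a telescoping/averaging argument, not a statement holding for all $k$ beyond a threshold. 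Your phrase ``once $\{A_n\}$ and $\{t_n\}$ are chosen in the right relation'' is consistent with this, but as written it could be misread as claiming $\eta_n=O(1/t_n)$ generically; you should either cite Lemma \ref{lem:genQ}/Corollary \ref{cor:infpsi} directly for the existence of suitable $t_n$, or reproduce that telescoping argument.
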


In Section~\ref{secwithoutT}, we show that our approach
also proves the preceding theorem for the
Cayley graph of any group without property (T).
It is known~\cite{Mok95,KS97} that a group
admits such an equivariant harmonic mapping
if and only if it does not have property (T); see also~\cite{Kleiner07}, Lemma A.6.

One can use such mappings to obtain more detailed information on the
random walk.
Vir\'ag~\cite{Virag08} showed that, in the setting of
Cayley graphs of amenable groups, one has
$\E [\dist(X_0,X_t) ] \geq C\sqrt{t}/|S|^{3/2}$
for some $C > 0$.
This can be proved by analyzing the process $\Psi(X_t)$ via the BDG
martingale inequalities; see, for example,~\cite{KW92}, Theorem 5.6.1.\setcounter{footnote}{1}\footnote{In fact, Virag proceeds
by explicitly bounding
$\E [\| M_0-M_t\| ^4 ] \leq O(|S|^2 t^2)$ when $\{M_t\}$ is
any Hilbert space-valued martingale with $\E [\| M_{t+1}-M_t\| ^2
|  \mathcal F_t ]\leq1$
and $\E [\| M_{t+1}-M_t\| ^4 |  \mathcal F_t ] \leq|S|^2$,
for all $t \geq0$.}

In Section~\ref{secestimates},
we show how a stronger bound can
be derived directly from hitting time estimates,
which can themselves be easily derived for martingales, then transferred
to the group setting via harmonic maps. More generally,
we study some finer properties of the escape behavior
of the random walk.

\subsection{Related work}

We recall some previous results on the rate of escape
of random walks on groups.
A large amount of work has been devoted to classifying
situations where the rate of escape $\mathbb E [\dist(X_0,X_t)]$ is linear;
we refer to the survey of Vershik~\cite{Vershik00}.
\`{E}rshler has given examples where the rate
can be asymptotic to\vadjust{\goodbreak}
$t^{1-2^{-k}}$ for any $k \in\mathbb N$
\cite{Ershler01}. Following seminal work of Varopoulos~\cite{Var},
Hebisch and Saloff-Coste~\cite{HSC} obtained precise heat kernel
estimates for symmetric bounded-range random walks on groups of
polynomial growth. In particular, Theorem 5.1 in~\cite{HSC} implies
our Theorem~\ref{thmmaininfinite} for groups of polynomial growth.
However, for groups of super-polynomial growth, it seems that existing
heat-kernel bounds (see, e.g., Theorem 4.1 in~\cite{HSC}) are not
powerful enough to imply Theorem~\ref{thmmaininfinite}.
Diaconis and Saloff-Coste~\cite{DSC94} show that
on finite groups satisfying
a certain ``moderate growth'' condition,
the random walk mixes in $O(D^2)$ steps,
where $D$ is the diameter of the group in the word metric.
A sequence of works~\cite{ANP,NP1,NP2}
have related the rate of escape of random walks
to questions in geometric group theory,
notably to estimates of Hilbert compression
exponents of groups. Our argument for finite groups was motivated by
the work of the first author with Y.~Makarychev~\cite{LM} on effective,
finitary versions of Gromov's polynomial growth theorem. Another
substantial work in this direction is the recent preprint of Shalom and
Tao~\cite{ST},
written independently of the present paper. Constructions of nearly
harmonic functions play a key role there as well.

\section{Escape rate of random walks}
\label{secescape}


In the present section, we will consider a finite or infinite
symmetric, stochastic matrix $\{P(x,y)\}_{x,y \in V}$ for some
index set~$V$. 
We write $\Aut(P)$ for the set of all bijections $\sigma: V \to V$
whose diagonal action preserves $P$, that is, $P(x,y)=P(\sigma x,
\sigma y)$
for all $x,y \in V$. For the most part, we will be concerned
with matrices $P$ for which $\Aut(P)$ acts transitively on~$V$.
A~primary example is given by taking $P$ to be the transition matrix
of the simple random walk on a finite or infinite vertex-transitive
graph $G$.

\begin{theorem}\label{thmpotential}
Let $V$ be an at most countable index set, and
consider any symmetric, stochastic matrix
$\{P(x,y)\}_{x,y \in V}$.
Suppose that $\Gamma\leq\Aut(P)$ is a closed,
unimodular subgroup
which acts transitively on $V$, and let $G=(V,E)$
be any graph on which $\Gamma$ acts by automorphisms.
If $\dist$ is the path metric on $G$, and $\psi\in\ell^2(V)$,
then
%
\begin{equation}
\label{eqinfdrift} \E \bigl[\dist(X_0, X_t)^2
\bigr] \geq p_* \frac{\langle\psi,
(I-P^t) \psi\rangle}{\langle\psi, (I-P) \psi\rangle} \geq p_* \biggl(t - t^2
\frac{\| (I-P) \psi\| ^2}{2\langle\psi, (I-P) \psi
\rangle} \biggr),
\end{equation}
where $\{X_t\}$ denotes the random walk with transition kernel $P$
started at any $X_0=x_0 \in V$, and
\[
p_* = \min \bigl\{ P(x,y) \dvtx \{x,y\} \in E \bigr\}.
\]
\end{theorem}

\begin{pf}
Since $\Gamma$ is unimodular, we can choose the Haar measure $\mu$
on $\Gamma$ to be normalized
so that $\mu(\Gamma_x)=1$ for every $x \in V$,
where $\Gamma_x$ is the stabilizer subgroup of $x$.
(Note that the stabilizer $\Gamma_x$ is compact
since $\Gamma$ acts by automorphisms on $G$, which
has all its vertex degrees bounded by $1/p_*$.)
Define $\Psi: V \to L^2(\Gamma, \mu)$ by
$\Psi(x) \dvtx \sigma\mapsto\psi(\sigma x)$.\vadjust{\goodbreak}

In this case, for every $z \in V$,
%
\begin{eqnarray}
\label{eqlastbound}
&&\sum_{y \in V} P(z,y) \bigl\| \Psi(y)-
\Psi(z) \bigr\| _{L^2(\Gamma,\mu)}^2 \nonumber\\
&&\qquad= \sum_{y \in V}
P(z,y) \int \bigl|\psi(\sigma z)-\psi(\sigma y)\bigr|^2 \,d\mu(\sigma)
\nonumber
\\
&&\qquad= \sum_{y \in V} \int P(\sigma z, \sigma y) \bigl|\psi(
\sigma z)-\psi (\sigma y)\bigr|^2 \,d\mu(\sigma)
\\
&& \qquad=\mu(\Gamma_z) \sum_{x,y \in V} P(x,y) \bigl|
\psi(x)-\psi(y)\bigr|^2\nonumber
\\
&&\qquad= 2 \bigl\langle\psi, (I-P)\psi \bigr\rangle.\nonumber
\end{eqnarray}

Thus for $\{x,y\} \in E$, we have $\| \Psi(x)-\Psi(y)\| _{L^2(\Gamma
,\mu)}^2 \leq\frac{2 \langle\psi, (I-P)\psi\rangle}{p_*}$,
which implies that
%
\begin{equation}
\label{eqgrad} \| \Psi\| _{\Lip} \leq\sqrt{\frac{2 \langle\psi, (I-P)\psi\rangle}{p_*}},
\end{equation}
where $\Psi$ is considered as a map from $(V,\dist)$ to $L^2(\Gamma
,\mu)$,
and we use $\| \Psi\| _{\Lip}$ to denote the infimal number $L$ such
that $\Psi$
is $L$-Lipschitz.

So, for
any $x_0 \in V$, we have
%
\begin{eqnarray}\label{eqvariation}
&&\| \Psi\| _{\Lip}^2 \mathbb E \bigl[\dist(X_0,X_t)^2
| X_0=x_0 \bigr] \nonumber\\
&&\qquad\geq \mathbb E \bigl[\bigl\|
\Psi(X_0)-\Psi(X_t)\bigr\| _{L^2(\Gamma,\mu)}^2 |
X_0=x_0 \bigr]
\nonumber
\\
&&\qquad= \int\mathbb E \bigl[\bigl|\psi(\sigma X_0)-\psi(\sigma
X_t)\bigr|^2 | X_0=x_0 \bigr] \,d
\mu( \sigma)
\nonumber
\\[-8pt]
\\[-8pt]
\nonumber
&&\qquad= \sum_{x \in V} \mathbb E \bigl[\bigl|
\psi(X_0)-\psi(X_t)\bigr|^2 | X_0=x
\bigr]
\\
&&\qquad= 2 \bigl\langle\psi, \bigl(I-P^t \bigr) \psi \bigr\rangle
\nonumber
\\
&&\qquad= 2 \sum_{i=0}^{t-1} \bigl\langle\psi,
(I-P) P^i \psi \bigr\rangle, \nonumber
\end{eqnarray}
where in the third line, we have used the fact that the action of
$\sigma$
preserves~$P$.

To finish, we use the fact that $I-P$ is self-adjoint to compare
adjacent terms via
\begin{eqnarray*}
&&\bigl| \bigl\langle\psi, (I-P) P^{i} \psi \bigr\rangle- \bigl\langle\psi,
(I-P) P^{i+1} \psi \bigr\rangle\bigr|\\
&&\qquad = \bigl| \bigl\langle(I-P) \psi,
P^i (I-P) \psi \bigr\rangle\bigr| \leq\bigl\| (I-P) \psi\bigr\| ^2,
\end{eqnarray*}
where the final inequality follows because $P$ is stochastic, and
hence a contraction. From this, we infer that $\langle\psi, (I-P)P^i
\psi\rangle\ge\langle\psi, (I-P) \psi\rangle-i\| (I-P) \psi\| ^2$, whence
\[
\sum_{i=0}^{t-1} \bigl\langle\psi,
(I-P)P^i \psi \bigr\rangle\geq t \bigl\langle \psi, (I-P) \psi \bigr
\rangle- \frac{t^2}{2} \bigl\| (I-P)\psi\bigr\| ^2.
\]
Combining the preceding line with \eqref{eqgrad} and \eqref
{eqvariation} yields
\[
\frac{1}{p_*} \E \bigl[\dist(X_0,X_t)^2
\bigr] \geq\frac{\langle
\psi, (I-P^t) \psi\rangle}{\langle\psi, (I-P)\psi\rangle} \geq t - t^2 \frac{\| (I-P) \psi\| ^2}{2\langle\psi, (I-P) \psi\rangle}.
\]
\upqed\end{pf}

We now demonstrate circumstances
in which an appropriate $\psi\in\ell^2(V)$ exists.
Corollaries~\ref{corfinite},~\ref{corinfinite-drift}
and Conjecture~\ref{conjfinite}
all assume the notation of Theorem~\ref{thmpotential}.

\begin{cor}[(The finite case)]
\label{corfinite}
Let $V$ be a finite index set,
and
suppose that $\Aut(P)$ acts transitively on $V$.
If
$\lambda< 1$ is the second-largest eigenvalue of $P$, then
%
\begin{equation}
\label{eqsumbound} \mathbb E \bigl[\dist(X_0, X_t)^2
\bigr] \geq p_* \bigl(1+\lambda+\lambda^2+\cdots+
\lambda^{t-1} \bigr).
\end{equation}
In particular,
\[
\mathbb E \bigl[\dist(X_0, X_t)^2 \bigr]
\geq p_* t/2
\]
for $t \leq(1-\lambda)^{-1}$.
\end{cor}

\begin{pf}
Let $\psi\dvtx V \to\mathbb R$ satisfy $P \psi= \lambda\psi$.
By Theorem~\ref{thmpotential},
\[
\mathbb E \bigl[\dist(X_0,X_t)^2 \bigr]
\geq p_* \frac{\langle\psi
, (I-P^t) \psi\rangle}{\langle\psi, (I-P) \psi\rangle} = p_* \frac{1-\lambda^t}{1-\lambda} = p_* \bigl(1+\lambda+
\lambda^2+\cdots+\lambda^{t-1} \bigr).
\]
To complete the proof, use the inequality $\lambda^j \ge(1-t^{-1})^j
\ge1-j/t$.
\end{pf}

\begin{remark}[(Weighted graphs)]
\label{remweighted}
In particular, if $P$ is irreducible and $p_* = \min\{ P(x,y) \dvtx P(x,y)
> 0 \}$, then the
conclusion is that $\E [\dist(X_0,X_t)^2 ] \geq p_* t/2$
for $t \leq(1-\lambda)^{-1}$.
Thus if $P$ is the simple random walk on a $d$-regular graph,
the conclusion is $\E [\dist(X_0,X_t)^2 ] \geq t/(2d)$.

To see that the asymptotic dependence on $d$ is tight, one can consider
a cycle of length $n$, together with $d-2$ self loops at each vertex,
for $d \geq2$.
In this case, $\E [\dist(X_0,X_t)^2 ] \le2t/d$ for all $t
\geq0$.
\end{remark}

\begin{remark}[(After the relaxation time)]
The quantity $(1-\lambda)^{-1}$ is called the \textit{relaxation time} of
the random
walk specified by $P$, and bound \eqref{eqsumbound}
degrades after this time. It is interesting to consider
what happens between the relaxation time and the mixing time which is
always at most $O(\log|V|) (1-\lambda)^{-1}$. One might conjecture
that $\E [\dist(X_0,X_t)^2 ]$ continues to have a linear
lower bound
until the mixing time.
Toward this end, we pose the following conjecture.

\begin{conjecture}
\label{conjfinite}
There exists a constant $\epsilon_0 > 0$ such that the following holds.
For every finite, connected, $d$-regular transitive graph $G=(V,E)$ with
diameter $D$,
\[
\E \bigl[\dist(X_0,X_t)^2 \bigr] \geq
\epsilon_0 t/d
\]
for $t \leq\epsilon_0 D^2$, where $\{X_t\}$ is
the simple random walk on $G$.
\end{conjecture}
\end{remark}

We remark that the results of~\cite{DSC94} imply that the
conjecture is correct for a wide class of groups of ``moderate growth.''

\begin{cor}[(Infinite amenable graphs)]
\label{coramen}
If $G=(V,E)$ is an infinite, transitive, connected, amenable graph
with degree $d$, and $\{X_t\}$ is the simple random walk, then
\[
\E \bigl[\dist(X_0, X_t)^2 \bigr] \geq t/d
.
\]
\end{cor}

\begin{pf}
If $P$ is the transition kernel of the simple random walk,
it is a standard fact~\cite{Kesten59} that when $G$ is infinite,
connected and amenable,
the spectrum of $P$ has an accumulation point at $1$, but does not
contain $1$. Therefore, for every $\delta> 0$ and $\varepsilon> 0$,
there exists a $\delta' \in(0,\delta]$ so that, by considering the
spectral projection of $P$ onto the interval
$[1-\delta'-\varepsilon,1-\delta')$, we obtain a unit vector $\psi
\in\ell^2(V)$ (an approximate eigenvector)
for which $\langle\psi, P^t \psi\rangle\leq(1-\delta')^t$, while
$\langle\psi, P \psi\rangle\geq1-\delta'-\varepsilon$.

Plugging this into Theorem~\ref{thmpotential}, we conclude that
\[
\E \bigl[\dist(X_0, X_t)^2 \bigr] \geq
\frac{1}{d} \cdot\frac
{1-(1-\delta')^t}{\delta'+\eps} .
\]
Sending $\eps\to0$ and then $\delta\to0$ (and hence $\delta' \to
0$) yields the desired claim.
\end{pf}

\begin{question}
The preceding corollary yields a uniform lower bound of the form $\E
[\dist(X_0,X_t)^2] \geq Ct/d$ for all
$d$-regular infinite, connected, amenable graphs. In fact, one can take
$C=1$. Certainly for every $d$-regular
infinite, connected graph $G$, there exists a constant $C_G$ such that
$\E[\dist(X_0,\break X_t)^2] \geq C_G t/d$,
since in the nonamenable case $\dist(X_0, X_t)$ grows linearly with
$t$, but with some constant depending on $G$.
It is natural to ask whether one can take $C_G \geq\Omega(1)$, that
is, whether a uniform lower bound
holds without the amenability assumption. This seems closely related to
Conjecture~\ref{conjfinite}.\looseness=1
\end{question}


\subsection{Infinite amenable graphs}
\label{secamenable}

While Corollary~\ref{coramen} gives satisfactory results for
infinite, amenable graphs,
we take some time in this section to further analyze the amenable case;
in particular, the explicit
construction of
Lemma~\ref{lemgenQ} serves
as a connection between random walks and our construction of harmonic
functions in Section~\ref{secharmonic}.

The following theorem will play a role
in a number of arguments. The transitive
version is due to Soardi and Woess~\cite{SW90},
and the extension to quasi-transitive actions is
from~\cite{Salvatori92}. See also a different
proof in~\cite{BLPS99}, Theorem~3.4.

We recall that for a graph $G=(V,E)$,
we say that a group $\Gamma\leq\Aut(G)$ is \textit{quasi-transitive}
if $|\Gamma\orbit V| < \infty$, where $\Gamma\orbit V$ denotes the
set of $\Gamma$-orbits of $V$.

\begin{theorem}\label{thmamenable}
Let $G$ be a graph and $\Gamma\leq\Aut(G)$ a closed,
quasi-transitive subgroup. Then $G$ is amenable
if and only if $\Gamma$ is amenable and unimodular.
\end{theorem}

We begin with the following general construction.
Gromov~\cite{Gromov03}, Sec-\break tions~3.6--3.7, uses a similar
analysis in the setting of the continuous
heat flow on manifolds (see, in particular,
Remark 3.7(B) in~\cite{Gromov03}).
We remark that, in this setting,
the result itself follows rather directly from spectral projection
as in the proof of Corollary~\ref{coramen}.
We present the following proof because it is quite elementary and explicit,
and directly relates our harmonic function construction (Section \ref
{secharmonic})
to random walks.

\begin{lemma}\label{lemgenQ}
Let $\mathcal{H}$ be a Hilbert space, and let $Q \dvtx \mathcal{H} \to
\mathcal{H}$
be a self-adjoint linear operator which is contractive, that is, with
$\| Q\| _{\mathcal{H} \to\mathcal{H}}\leq1$.
Suppose that for some $\theta\in(0,\frac12)$,
there exists an $f \in\mathcal{H}$ which satisfies $\| f\| _{\mathcal
{H}}=1$, $\| Qf-f\| _{\mathcal{H}} \leq\theta$,
and
%
\begin{equation}
\label{eqlimk} \lim_{k \to\infty} \frac{1}{k} \sum
_{i=0}^{k-1} \bigl\langle Q^i f,f \bigr
\rangle= 0.
\end{equation}
Then there exists an element $\varphi\in\mathcal{H}$
with
\[
\frac{\| (I-Q)\varphi\| ^2_{\mathcal{H}}}{\langle\varphi,
(I-Q)\varphi\rangle_{\mathcal{H}}} \leq32 \theta.
\]
\end{lemma}

Before venturing into the proof, we mention a natural approach for a
special case. Consider the situation where $\Gamma$
is a finitely-generated, amenable group, and $G$ is the corresponding
Cayley graph.
Let $P$ be the transition kernel of the corresponding random walk, and
let $\{F_k\}$ be a F{\o}lner sequence in $G$
which satisfies
$\langle\1_{F_k}, P^i \1_{F_k} \rangle\geq\frac12 |F_k|$ for
$i=0,1,2,\ldots,k$.

In that case, one might choose to put
%
\begin{equation}
\label{eqmightchoose} \psi_k = \sum_{i=0}^{\infty}
P^i \1_{F_k} ,
\end{equation}
where $\1_{F_k}$ is the characteristic function of $F_k$.
Assume, for the moment, that $\psi_k \in\ell^2(V)$.
In this case,
$
(I-P) \psi_k = \1_{F_k},
$
so
\[
\bigl\| (I-P) \psi_k\bigr\| ^2 = |F_k|,
\]
while
\[
\bigl\langle\psi_k, (I-P) \psi_k \bigr\rangle= \sum
_{i=0}^{\infty} \bigl\langle \1_{F_k},
P^i \1_{F_k} \bigr\rangle\geq\frac{k}{2}
|F_k| ,
\]
by our assumption on $\{F_k\}$.
This implies that
\[
\lim_{k \to\infty} \frac{\| (I-P) \psi_k\| ^2}{\langle\psi_k,
(I-P)\psi_k\rangle} \to0,
\]
yielding an analog to the conclusion of Lemma~\ref{lemgenQ}.

The only remaining issue is whether $\psi_k \in\ell^2(V)$, which requires
some assumptions on the group $\Gamma$. To get around this, we truncate
the sum in \eqref{eqmightchoose}. The somewhat delicate issue that arises
is where to truncate. The following proof shows that a good truncation
point always exists.

%

\begin{pf*}{Proof of Lemma~\ref{lemgenQ}}
Given $f \in\mathcal{H}$ and $k \in\mathbb N$, we define $\varphi_{k} \in\mathcal{H}$ by
\[
\varphi_{k} = \sum_{i=0}^{k-1}
Q^i f.
\]
First, using $(I-Q)\varphi_{k} = (I-Q^k) f$ and the fact that $Q$ is a
contraction, we have
%
\begin{equation}
\label{eqlapest} \bigl\| (I-Q)\varphi_{k}\bigr\| _{\mathcal H}^2
\leq4 \| f\| _{\mathcal H}^2.
\end{equation}

On the other hand,
\begin{eqnarray*}
\bigl\langle\varphi_{k}, (I-Q)\varphi_{k} \bigr
\rangle_{\mathcal H} &=& \bigl\langle\varphi_{k},
\bigl(I-Q^k \bigr) f \bigr\rangle_{\mathcal H}
\\
&=& \Biggl\langle \bigl(I-Q^k \bigr) \sum
_{i=0}^{k-1} Q^i f, f \Biggr
\rangle_{\mathcal H}
\\
&=& \langle2 \varphi_{k}-\varphi_{2k}, f
\rangle_{\mathcal H},
\end{eqnarray*}
where in the second line we have used the fact that $I-Q^k$ is self-adjoint.
Combining this with \eqref{eqlapest} yields
%
\begin{equation}
\label{eqratio1} \frac{\| (I-Q)\varphi_{k}\| _{\mathcal H}^2}{\langle\varphi_{k},
(I-Q)\varphi_{k}\rangle_{\mathcal H}} \leq\frac{4 \| f\| _{\mathcal
H}^2}{\langle2 \varphi_{k}-\varphi_{2k},f\rangle_{\mathcal H}}.
\end{equation}
%
The following claim will conclude the proof.

\begin{claim*} There exists a $k \in\mathbb N$
such that
%
\begin{equation}
\label{eqwantit} \langle2 \varphi_{k} - \varphi_{2k}, f
\rangle_{\mathcal H} \geq \frac{1}{8\theta}.\vadjust{\goodbreak}
\end{equation}
\end{claim*}
It remains to prove the claim. By assumption, $f$ satisfies $\| f\| _{\mathcal H}=1$, and
$\| Qf-f\| _{\mathcal H} \leq\theta$. Since $Q$ is a contraction, we
have $\| Q^j f - Q^{j-1} f\| _{\mathcal H} \leq\theta$
for every $j \geq1$, and thus by the triangle inequality, $\| Q^j f -
f\| _{\mathcal H} \leq j \theta$ for every $j \geq1$.
It follows by Cauchy--Schwarz that $\langle f, (I-Q^j) f \rangle_{\mathcal H} \leq j\theta$, therefore
\[
\label{eqinnerprod} \bigl\langle f, Q^j f \bigr\rangle_{\mathcal H}
\geq1-j \theta.
\]
Thus for every $j \geq1$,
\[
\langle\varphi_{2^j}, f \rangle_{\mathcal H} \geq2^j
\bigl(1-2^j \theta \bigr).
\]

Fix $\ell\in\mathbb N$ so that $2^{\ell} \theta\leq\frac12 \leq
2^{\ell+1} \theta$,
yielding
%
\begin{equation}
\label{eqM} \langle\varphi_{2^{\ell}}, f \rangle_{\mathcal H} \geq
\frac
{1}{8\theta}.
\end{equation}

Now, let $a_m = \langle\varphi_{2^m}, f \rangle_{\mathcal H}$, and
write, for some $N \geq1$,
\[
a_{\ell} - \frac{a_{N}}{2^{N-\ell}} = \sum_{m=\ell}^{N-1}
\frac{2
a_m - a_{m+1}}{2^{m-\ell+1}}.
\]
By \eqref{eqlimk}, we have
%
\begin{equation}
\label{eqlimzero} \lim_{N \to\infty} \frac{a_N}{2^N} = 0.
\end{equation}
Using \eqref{eqM} and
taking $N \to\infty$ on both sides above yields
\[
\frac{1}{8\theta} \leq a_{\ell} = \sum_{m=\ell}^{\infty}
\frac{2
a_m - a_{m+1}}{2^{m-\ell+1}}.
\]
Since $\sum_{m=\ell}^{\infty} \frac{1}{2^{m-\ell+1}}=1$, there
must exist some $m \geq\ell$ with $2 a_m - a_{m+1} \geq\frac
{1}{8\theta}$.
This establishes claim~(\ref{eqwantit}) for $k=2^m$, and in view of
(\ref{eqratio1}), completes the proof of the lemma.
\end{pf*}

We now arrive at the following corollaries.
Recall that if $P$ is transient or null-recurrent,
then we have the pointwise limit,
%
\begin{equation}
\label{eqpwlimit} P^i f \to0\qquad \mbox{for every } f \in
\ell^2(V).
\end{equation}
(This is usually proved for finitely supported $f$; see, e.g.,
\cite{GS92}, Theorem 6.4.17 or~\cite{LPW09}, Theorem 21.17.
The general case follows by approximation using the contraction
property of $P$.)

\begin{cor}\label{corinfpsi}
If $V$ is infinite, $P$ satisfies \eqref{eqpwlimit},
and $\Gamma\leq\Aut(P)$ is a closed, amenable, unimodular subgroup,
which acts transitively on $V$, then
%
\begin{equation}
\label{eqinfpsi} \inf_{\varphi\in\ell^2(V)} \frac{\| (I-P) \varphi\| ^2}{\langle
\varphi, (I-P) \varphi\rangle} = 0.
\end{equation}
\end{cor}

\begin{pf}
This follows from Lemma~\ref{lemgenQ} using
the fact that, under the stated assumptions,
for every $\theta> 0$, there exists an $f \in\ell^2(V)$
with $\| f\| =1$ and $\| Pf-f\|  \leq\theta$.

This is a standard fact that can be proved, as in
\cite{Woess00}, Theorem 12.10. In general,
for every $\theta> 0$,
one considers, for some $\varepsilon= \varepsilon(\theta) > 0$, the
graph $G_{\varepsilon}$ with vertices $V$
and an edge $\{x,y\}$ whenever $P(x,y) \geq\varepsilon$.
Since $\Gamma\leq\Aut(G_\varepsilon)$, Theorem~\ref{thmamenable}
implies that
$G_{\varepsilon}$ is amenable, and then one can take $f=|F|^{-1/2} \1_{F}$ to be the
(normalized) indicator of a suitable F{\o}lner set $F \subseteq V$ in
$G_\varepsilon$.
The idea is that for $\varepsilon> 0$ chosen small enough,
$\| P\1_F-\1_F\| ^2$ is close to the size of the outer vertex boundary of
$F$ in $G_\varepsilon$.
\end{pf}

The following is an immediate consequence of Theorem~\ref{thmpotential}
combined with the preceding result.

\begin{cor}[(The amenable case)]\label{corinfinite-drift}
Under the assumptions of Theorem~\ref{thmpotential}, the following holds.
If $V$ is a countably infinite index set,
$P$ satisfies~\eqref{eqpwlimit},
and $\Gamma\leq\Aut(P)$ is a closed, amenable,
unimodular subgroup which acts transitively on $V$, then
%
\begin{equation}
\label{eqinfdrift2} \E \bigl[\dist(X_0,X_t)^2
\bigr] \geq p_* t.
\end{equation}
%
\end{cor}


%

\begin{cor}[(The nearly amenable case, for small times)]
Under the assumptions of Theorem~\ref{thmpotential}, the following holds.
If $\rho=\rho(P)$ is the spectral radius of $P$, then
for all times $t \leq(32 (1-\rho))^{-1}$,
\[
\mathbb E \bigl[\dist(X_0,X_t)^2 \bigr]
\geq\frac{p_* t}{2}.
\]
\end{cor}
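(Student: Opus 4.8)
The plan is to deduce this from Lemma \ref{lem:genQ} applied with $\mathcal H = \ell^2(V)$ and $Q = P$. The key point is that the spectral radius $\rho = \rho(P)$ of a symmetric stochastic matrix equals $\sup \mathrm{spec}(P)$ (since $P$ is self-adjoint and $\|P\| \le 1$), so for every $\delta > 0$ there is a unit vector $f \in \ell^2(V)$ with $\langle f, (I-P)f\rangle \le \delta$. Taking $\delta \to \rho - 1 \dots$ — more carefully: pick $\delta > 0$ small and choose $f$ with $\|f\|=1$ and $\langle f, Pf\rangle \ge \rho - \delta$, hence $\langle f, (I-P)f\rangle \le 1 - \rho + \delta$. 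Since $P$ is a contraction and self-adjoint, $\|Pf - f\|^2 = \langle (I-P)^2 f, f\rangle \le \langle (I-P)f, f\rangle \cdot \|(I-P)f\| / \dots$; cleanly, $\|Pf-f\|^2 = \langle f, (I-P)^2 f\rangle \le 2\langle f, (I-P)f\rangle \le 2(1-\rho+\delta)$ using $(I-P)^2 \preceq 2(I-P)$ (valid since $0 \preceq I-P \preceq 2I$). So $\|Pf - f\| \le \sqrt{2(1-\rho) + 2\delta} =: \theta$.

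Next I need the hypothesis \eqref{eq:limk} of Lemma \ref{lem:genQ}, namely $\frac{1}{k}\sum_{i=0}^{k-1}\langle P^i f, f\rangle \to 0$. This is where I should be careful, since the statement of the corollary does not assume \eqref{eq:pwlimit}. But in fact the Cesàro averages of $\langle P^i f, f\rangle$ converge to $\langle f, \Pi f\rangle$ where $\Pi$ is the projection onto the $1$-eigenspace of $P$ (mean ergodic theorem for the contraction $P$ on Hilbert space). When $V$ is infinite and $\Gamma$ acts transitively, $P$ has no $\ell^2$-eigenfunction at eigenvalue $1$ (a $\Gamma$-invariant... actually an invariant vector would have to be constant on orbits, hence constant, hence not in $\ell^2$), so $\Pi = 0$ and \eqref{eq:limk} holds. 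Alternatively — and this matches the paper's framing — one simply invokes \eqref{eq:pwlimit}, which is automatic here: the chain is either transient or null-recurrent, because a recurrent chain with transitive symmetry group on an infinite state space is null-recurrent. I would state this lemma-style and move on.

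With $\theta = \sqrt{2(1-\rho)+2\delta}$ and \eqref{eq:limk} in hand, Lemma \ref{lem:genQ} produces $\varphi \in \ell^2(V)$ with $\frac{\|(I-P)\varphi\|^2}{\langle \varphi, (I-P)\varphi\rangle} \le 32\theta$. (One must check $\theta < \frac12$, which requires $1 - \rho < \frac{1}{8}$ roughly; this is where the hypothesis $t \le (32(1-\rho))^{-1}$, equivalently $1-\rho \le \frac{1}{32t} \le \frac{1}{128}$ for $t \ge 4$, comes in — and for small $t$ the bound is anyway trivial or follows from Corollary \ref{cor:finite}-type reasoning, so I can assume $1-\rho$ is small.) Now feed $\varphi$ into the second inequality of \eqref{eq:infdrift} in Theorem \ref{thm:potential}:
\[
\E[\dist(X_0,X_t)^2] \ge p_*\left(t - t^2 \frac{\|(I-P)\varphi\|^2}{2\langle \varphi, (I-P)\varphi\rangle}\right) \ge p_*\left(t - t^2 \cdot 16\theta\right).
\]
Letting $\delta \to 0$ gives $\theta \to \sqrt{2(1-\rho)}$, and then one wants $16 t^2 \sqrt{2(1-\rho)} \le t/2$, i.e. $t\sqrt{2(1-\rho)} \le \frac{1}{32}$, i.e. $t^2 (1-\rho) \le \frac{1}{2048}$. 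Hmm — this gives $t \le c(1-\rho)^{-1/2}$, not the claimed $t \le (32(1-\rho))^{-1}$.

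So the honest plan must avoid the wasteful bound $\langle \varphi,(I-P)P^i\varphi\rangle \ge \langle\varphi,(I-P)\varphi\rangle - i\|(I-P)\varphi\|^2$ and instead use the sharper first inequality $\E[\dist(X_0,X_t)^2] \ge p_* \frac{\langle\varphi,(I-P^t)\varphi\rangle}{\langle\varphi,(I-P)\varphi\rangle}$ together with a direct lower bound on $\langle\varphi,(I-P^t)\varphi\rangle / \langle\varphi,(I-P)\varphi\rangle$ coming from the spectral picture. Concretely, I would argue: it suffices to find $\psi$ with $\langle\psi,(I-P^t)\psi\rangle \ge \frac{t}{2}\langle\psi,(I-P)\psi\rangle$, and this holds for any $\psi$ spectrally supported on $[\rho - \eta, \rho]$ for suitable small $\eta$, since there $1 - x^t \ge \frac{t}{2}(1-x)$ whenever $t(1-x) \le \dots$ — precisely, $1 - x^t \ge t(1-x) - \binom{t}{2}(1-x)^2 \ge \frac{t}{2}(1-x)$ as soon as $(t-1)(1-x) \le 1$, which holds for all $x$ with $1-x \le \frac{1}{t}$, and in particular on a spectral window near $\rho$ once $1-\rho$ is small enough, i.e. $t \le (1-\rho)^{-1}$ roughly; the constant $32$ is slack. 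The main obstacle, then, is bookkeeping the constants so that the window hypothesis $1-\rho < $ (something) and the final time restriction $t \le (32(1-\rho))^{-1}$ are mutually consistent — and recognizing that the clean route is through the first, not the second, inequality of \eqref{eq:infdrift}, mirroring the proof of Corollary \ref{cor:finite} with the role of $\lambda$ played by the top of the spectrum $\rho$ and with Lemma \ref{lem:genQ} / spectral projection only used to manufacture an admissible $\psi$ in the infinite setting.
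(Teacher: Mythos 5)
You correctly identified Lemma~\ref{lem:genQ} combined with the second inequality of~\eqref{eq:infdrift} as the intended route, and you correctly noticed that $\theta \approx \sqrt{2(1-\rho)}$ is too weak to give the claimed time window. But the fix is not to abandon this route: the obstacle is self-inflicted by the wasteful inequality $(I-P)^2 \preceq 2(I-P)$. The paper instead computes $\|f-Pf\|^2 = \|f\|^2 + \|Pf\|^2 - 2\langle f,Pf\rangle \leq 1 + \rho^2 - 2\langle f,Pf\rangle$, using $\|Pf\|\leq\rho\|f\|$, and taking the infimum over unit $f$ (with $\sup_{\|f\|=1}\langle f,Pf\rangle = \rho$, since $P$ is self-adjoint) yields $\inf\|f-Pf\|^2 \leq (1-\rho)^2$, i.e.\ $\theta \leq 1-\rho$, a quadratic improvement. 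Plugging $\theta = 1-\rho$ into Lemma~\ref{lem:genQ} gives $\|(I-P)\varphi\|^2/\langle\varphi,(I-P)\varphi\rangle \leq 32(1-\rho)$, and then the second bound of~\eqref{eq:infdrift} gives $\E[\dist^2] \geq p_*\bigl(t - 16\,t^2(1-\rho)\bigr) \geq p_*t/2$ exactly when $t \leq (32(1-\rho))^{-1}$. So the claimed constant drops out of your original strategy once $\theta$ is bounded tightly.

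On the side issue of~\eqref{eq:limk}: you can dispense with the discussion of invariant vectors or \eqref{eq:pwlimit} entirely. The corollary is only non-vacuous when $\rho < 1$, and then the spectral theorem gives $\frac{1}{k}\sum_{i=0}^{k-1}\langle P^i f,f\rangle = \int \frac{1}{k}\sum_{i=0}^{k-1} x^i\,d\mu_f(x) \to 0$ by dominated convergence, since $\frac{1}{k}\sum_{i<k} x^i \to 0$ uniformly on $\mathrm{spec}(P) \subseteq [-1,\rho]$ when $\rho<1$. (It also forces $\theta = 1-\rho < \tfrac12$ once $\rho > \tfrac12$, which is the only interesting regime.)

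Your proposed fallback — spectrally projecting onto a window $[\rho-\eta,\rho]$ and using the first inequality of~\eqref{eq:infdrift} together with $1-x^t \geq \tfrac{t}{2}(1-x)$ for $t(1-x)\leq 1$ — is correct and would also deliver a bound of the claimed form (indeed it mirrors the proof of Corollary~\ref{cor:amen} and of Corollary~\ref{cor:finite}). It is a genuinely different argument from the paper's, bypassing Lemma~\ref{lem:genQ} altogether. What it misses is the point of placing this corollary here: to illustrate that the same "almost-invariant vector $\Rightarrow$ nearly harmonic $\varphi$" machinery of Lemma~\ref{lem:genQ}, which drives the amenable-case constructions, also yields the non-amenable small-time bound in three lines once $\theta\leq 1-\rho$ is observed.
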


\begin{pf}
Since $P$ is self-adjoint and positive, by standard
variational principles, we have
$\rho=\| P\| _{2 \rightarrow2} = \sup_{\| f\| =1} \langle Pf,f \rangle.
$
It follows that
\[
\inf_{\| f\| =1} \| f-Pf\| ^2 \leq \Bigl(\inf_{\| f\| =1} 1 +
\rho^2 - 2 \langle f, Pf \rangle \Bigr) = (1-\rho)^2.
\]
Combining this with Lemma~\ref{lemgenQ} yields the claimed result.
\end{pf}

Compare the preceding bound with the finite case (Corollary~\ref{corfinite}).

\begin{remark}[(Asymptotic rate of escape)]
\label{remasymptotic}
The constant $p_*$ in \eqref{eqinfdrift} is not tight.
To do slightly better, one can argue as follows.
Let $\Psi: V \to L^2(\Gamma,\mu)$ be as
in the proof of Theorem~\ref{thmpotential}.
Fix $x,y \in V$ with $L = \dist(x,y)$, and let $x = v_0, v_1, \ldots,
v_L = y$
be a shortest path from $x$ to $y$ in $G$.
In this case, the triangle inequality yields
\begin{eqnarray*}
 2 \bigl\| \Psi(x)-\Psi(y)\bigr\|
&\leq&\bigl\| \Psi(v_0)-\Psi(v_1)\bigr\| \\
&&\quad{} +
\sum_{i=1}^{L-1} \bigl(\bigl\| \Psi(v_{i-1})-
\Psi(v_{i})\bigr\| +\bigl\| \Psi(v_i)-\Psi (v_{i+1})\bigr\|  \bigr)
\\
&&\quad{}+ \bigl\| \Psi(v_{L-1})-\Psi(v_L)\bigr\| .
\end{eqnarray*}
But for every $i \in\{1,2,\ldots,L-1\}$, there are two terms
involving $v_i$, and for such~$i$, we can bound
\[
\bigl\| \Psi(v_i)-\Psi(v_{i-1})\bigr\| ^2 + \bigl\|
\Psi(v_i)-\Psi(v_{i+1})\bigr\| ^2 \leq
\frac{2 \langle\psi, (I-P)\psi\rangle}{p_*}
\]
as in \eqref{eqlastbound}. In this way, we gain a factor of 2 for
such terms.
Letting $\alpha$ denote the right-hand side of the preceding
inequality, we have
\[
\bigl\| \Psi(x)-\Psi(y)\bigr\|  \leq\sqrt{\alpha} \biggl(1+ \frac{L-1}{\sqrt {2}} \biggr) \leq
\sqrt{\frac{\alpha}{2}} (L+1).
\]
Thus for all $x,y \in V$, we have $\| \Psi(x)-\Psi(y)\| ^2 \leq[\dist
(x,y)+1]^2 \frac{\langle\psi, (I-P)\psi\rangle}{p_*}$.
Plugging this improvement into the proof of Theorem \ref
{thmpotential} yields
%
\begin{equation}
\label{eqoptimal} \mathbb E \bigl[ \bigl(\dist(X_0,X_t)+1
\bigr)^2 \bigr] \geq2 p_* \frac{\langle
\psi, (I-P^t) \psi\rangle}{\langle\psi, (I-P) \psi\rangle},
\end{equation}
which is asymptotically tight since, on the one hand, the simple random
walk on~$\mathbb Z$ satisfies $\E [\dist(X_0,X_t)^2 ] =t$,
while plugging \eqref{eqoptimal} into Corollary~\ref
{corinfinite-drift} yields
$\E [(\dist(X_0,X_t)+1)^2 ] \geq t$.

The dependence on $p_*$ is easily seen to be tight for the simple
random walk on~$\mathbb Z$ with a $1-2p_*$
holding probability added to every vertex, as in Remark~\ref{remweighted}.
\end{remark}

\section{Equivariant harmonic maps}
\label{secharmonic}

Let $V$ be a countably infinite index set,
and let $\{P(x,y)\}_{x,y \in V}$ be a
stochastic, symmetric matrix.
If $\mathcal H$ is a Hilbert space,
a~mapping $\Psi: V \to\mathcal H$ is called \textit{$P$-harmonic}
if, for all $x \in V$,
\[
\Psi(x) = \sum_{y \in V} P(x,y) \Psi(y).
\]
Suppose furthermore that we have
a group $\Gamma$ acting on $V$.
We say that
$\Psi$ is \textit{$\Gamma$-equivariant} if there exists an affine
isometric action $\pi$ of $\Gamma$
on $\mathcal H$, such that for every $g \in\Gamma$,
$\pi(g) \Psi(x) = \Psi(g x)$ for all $x \in V$.
If we wish to emphasize the particular action $\pi$, we will say that
$\Psi$ is
\textit{$\Gamma$-equivariant with respect to $\pi$}.

We remark in passing
that there do exist amenable groups admitting a nonunimodular action;
see, for example,~\cite{PPS06}, Example 2.2.

\begin{theorem}\label{thmharmonic}
For $P$ as above, let $\Gamma\leq\Aut(P)$ be a closed, amenable,
unimodular subgroup
which acts transitively on $V$.
Suppose there exists a connected graph $G=(V,E)$ on which
$\Gamma$ acts by automorphisms,\vadjust{\goodbreak} and that for~$x \in V$,
%
\begin{equation}
\label{eqbddstep} \sum_{y \in V} P(x,y)
\dist(x,y)^2 < \infty,
\end{equation}
where $\dist$ is the path metric on $G$.
Suppose also that
\[
p_* = \min \bigl\{ P(x,y) \dvtx \{x,y\} \in E \bigr\} > 0.
\]
Then there exists a Hilbert space $\mathcal H$,
and a nonconstant $\Gamma$-equivariant $P$-harmonic mapping from $V$
into $\mathcal H$.
\end{theorem}

%
%

\begin{pf}
It is a standard result that since $G$ is connected, $P$ satisfies~\eqref{eqpwlimit}.
Let $\{\psi_j\} \subseteq\ell^2(V)$ be a sequence of functions satisfying
%
\begin{equation}
\label{eqratio} \frac{ \langle(I-P) \psi_j, (I-P) \psi_j \rangle}{
\langle\psi_j, (I-P) \psi_j  \rangle} \to0.
\end{equation}
The existence of such a sequence is the content of Corollary~\ref{corinfpsi}.

Define
$\Psi_j \dvtx V \to L^2(\Gamma,\mu)$ by
\[
\Psi_j(x) \dvtx \sigma\mapsto\frac{\psi_j(\sigma^{-1} x)}{\sqrt{2
\langle\psi_j, (I-P)\psi_j\rangle}}.
\]
%

Since $\Gamma$ is unimodular, we can choose the Haar measure $\mu$
on $\Gamma$ to be normalized $\mu(\Gamma_x)=1$ for all $x \in V$,
where $\Gamma_x$ is the stabilizer subgroup of $x$.

%
%
%
%
Now, observe that for every $x \in V$,
%
\begin{eqnarray}
\label{eqlipest}
&&\sum_{y \in V} P(x,y) \bigl\|
\Psi_j(x)- \Psi_j(y)\bigr\| _{L^2(\Gamma,\mu)}^2\nonumber\\
 &&\qquad=
\frac{ \sum_{y \in V} P(x,y) \int|\psi_j(\sigma^{-1} x)-\psi_j(\sigma^{-1} y)|^2 \,d\mu(\sigma)}{ 2
\langle\psi_j, (I-P) \psi_j\rangle}
\nonumber
\\[-8pt]
\\[-8pt]
\nonumber
&&\qquad= \mu(\Gamma_x) \frac{ \sum_{u,y \in V} P(u,y) |\psi_j(u)-\psi_j(y)|^2}{2 \langle\psi_j, (I-P) \psi_j \rangle}
\\
&&\qquad= 1.\nonumber
\end{eqnarray}

Next, for every $x \in V$, we have
\begin{eqnarray*}
&&\biggl\llVert \Psi_j(x)-\sum_{y \in V}
P(x,y) \Psi_j(y) \biggr\rrVert_{L^2(\Gamma,\mu)}^2\\
&&\qquad=
\frac{ \int |\psi_j(\sigma^{-1} x)-\sum_{y
\in V} P(x,y) \psi_j(\sigma^{-1} y) |^2 \,d\mu(\sigma)}{2 \langle
\psi_j, (I-P)\psi_j\rangle}
\\
&&\qquad= \mu(\Gamma_x) \frac{ \sum_{u \in V} \llvert \psi_j(u)-\sum_{y \in V} P(u,y) \psi_j(y)\rrvert^2}{2 \langle\psi_j,
(I-P) \psi_j\rangle}
\\
&&\qquad=
\frac{ \langle(I-P) \psi_j, (I-P) \psi_j \rangle
}{2 \langle\psi_j, (I-P) \psi_j  \rangle}\ . 
\end{eqnarray*}
In particular, from \eqref{eqratio},
%
\begin{equation}
\label{eqharmlim} \lim_{j \to\infty} \biggl\llVert \Psi_j(x)-\sum
_{y \in V} P(x,y) \Psi_j(y) \biggr
\rrVert_{L^2(\Gamma,\mu)}^2 = 0,
\end{equation}
where the limit is uniform in $x \in V$.

%
%


Define a unitary action $\pi_0$ of $\Gamma$ on $L^2(\Gamma,\mu)$ as follows:
For $\gamma\in\Gamma, h \in L^2(\Gamma,\mu)$,
$[\pi_0(\gamma) h](\sigma) = h(\gamma^{-1} \sigma)$
for all $\sigma\in\Gamma$.
Notice that each $\Psi_j$ is $\Gamma$-equivariant since for $\gamma
\in\Gamma, x \in V$, we have
\[
\bigl(\pi_0(\gamma) \bigl[\Psi_j(x) \bigr] \bigr) (
\sigma) = \bigl[\Psi_j(x) \bigr] \bigl(\gamma^{-1} \sigma
\bigr) = \frac{\psi_j(\sigma^{-1} \gamma x)}{\sqrt{2 \langle\psi_j, (I-P)\psi_j\rangle}} = \bigl[\Psi_j(\gamma x) \bigr](\sigma).
\]

We state the next lemma in slightly more generality than we need
presently, since we will use it also in Section~\ref{secwithoutT}.

\begin{lemma}\label{lemgencon}
Suppose that $\mathfrak{H}$ is a Hilbert space, $\Gamma$ is a group,
and $\pi_0$ is an affine
isometric action of $\Gamma$ on $\mathfrak{H}$. Let $(V,\mathsf
{dist})$ be a countable metric space,
and consider a sequence of functions $\{\Psi_j \dvtx V \to\mathfrak{H}\}_{j=1}^{\infty}$,
where the $\Psi_j$'s are uniformly Lipschitz and $\Gamma$-equivariant
with respect to $\pi_0$.
Then there is a sequence of affine isometries $T_j \dvtx \mathfrak{H} \to
\mathfrak{H}$
and a subsequence $\{\alpha_j\}$ such that $T_{\alpha_j} \Psi_{\alpha_j}$ converges
pointwise to a map $\widehat{\Psi} \dvtx V \to\mathfrak{H}$ which is
$\Gamma$-equivariant
with respect to an affine isometric action $\pi$.
\end{lemma}

Before proving the lemma, let us see that it can be used
to finish the proof of Theorem~\ref{thmharmonic}.
Using \eqref{eqlipest}, one observes that for all $j \in\mathbb N$,
the map $\Psi_j$ is $\sqrt{1/p_*}$-Lipschitz
on $(V,\mathsf{dist})$.
Thus we are in position to apply the preceding lemma and arrive at a map
$\widehat{\Psi}\dvtx V \to L^2(\Gamma,\mu)$ which is $\Gamma$-equivariant
with respect to an affine isometric action.

From \eqref{eqharmlim}, we see that
$\widehat{\Psi}$ is $P$-harmonic.
Furthermore, since the ${\Psi}_j$'s are
uniformly Lipschitz, and we have the estimate \eqref{eqbddstep},
we see that \eqref{eqlipest} holds for $\widehat{\Psi}$ as well,
showing that $\widehat{\Psi}$ is nonconstant,
and completing the proof.

\begin{pf*}{Proof of Lemma~\ref{lemgencon}}
Arbitrarily order
the points of $V = \{x_1, x_2, \ldots\}$ and
fix a sequence
of subspaces $\{W_j\}_{j=1}^{\infty}$ of $\mathfrak{H}$
with $W_j \subseteq W_{j+1}$ for each $j=1,2,\ldots,$ and $\dim(W_j)=j$.
For each such $j$, define an affine isometry $T_j \dvtx \mathfrak{H} \to
\mathfrak{H}$ which
satisfies $T_j \Psi_j (x_1) = 0$ and, for every $r=1,2,\ldots, j$,
$\{T_j \Psi_j(x_k)\}_{k=1}^r \subseteq W_r$.
Put $\widehat{\Psi}_j = T_j \Psi_j$, and define an affine isometric
action $\pi_j$ of $\Gamma$ on $\mathfrak{H}$
by $\pi_j = T_j \pi_0 T_j^{-1}$. It is straightforward to check that
each $\widehat{\Psi}_j$
is $\Gamma$-equivariant with respect $\pi_j$.

Since the maps $\{\Psi_j\}$ are uniformly Lipschitz, the same holds
for the family $\{\widehat{\Psi}_j\}$.
We now pass to a subsequence
$\{\alpha_j\}$ along which $\widehat{\Psi}_{\alpha_j}(x)$
converges pointwise for every $x \in V$.
To see that this is possible, notice that by construction, for every
fixed $x \in V$,
there is a finite-dimensional subspace $W \subseteq\mathfrak{H}$ such
that $\widehat{\Psi}_j(x) \subseteq W$
for every $j \in\mathbb N$. Hence by the uniform Lipschitz property of
$\widehat{\Psi}_j$, the sequence
$\{\widehat{\Psi}_j(x)\}_{j=1}^{\infty}$ lies in a compact set.

We are thus left to show that $\widehat{\Psi}$ is $\Gamma$-equivariant.
Toward this end, we define an action $\pi$ of $\Gamma$ on $\mathfrak
{H}$ as follows:
On the image of $\widehat{\Psi}$, set
$\pi(\gamma) \widehat{\Psi}(x) = \widehat{\Psi}(\gamma x)$.
For $g \in\mathfrak{H}$ lying in the orthogonal complement of the span
of $\{\widehat{\Psi}(x)\}_{x \in V}$, we
put $\pi(\gamma) g = \pi(\gamma) 0$, and then
extend $\pi(\gamma)$ affine linearly to the whole space.
To see that such an affine linear
extension exists,
observe that
\[
\pi(\gamma) \widehat{\Psi}(x) = \widehat{\Psi}(\gamma x) = \lim_{j\to\infty}
\pi_{\alpha_j}(\gamma) \widehat{\Psi}_{\alpha_j}(x).
\]
From this expression, it follows immediately that $\pi$ acts by affine
isometries,
since each $\pi_{\alpha_j}$ does.
Thus $\widehat{\Psi}$ is $\Gamma$-equivariant with respect to $\pi
$, completing our construction.
\end{pf*}
\noqed\end{pf}

\begin{remark}
Note that, in the case where $P$ is simply the kernel of the
simple random walk on the Cayley graph of a finitely-generated
amenable group, one
can take the Hilbert space $\mathcal H$ in Theorem~\ref{thmharmonic}
to be simply $\ell^2(V)$.
\end{remark}

\subsection{Quasi-transitive graphs}

Only for the present section, we allow $P$ to be a nonsymmetric kernel
on the state space $V$.
We recall that $\Gamma$ is said to \textit{act quasi-transitively on a
set $V$} if
$|\Gamma\orbit V| < \infty$, where $\Gamma\orbit V$ denotes
the set of $\Gamma$-orbits of $V$.
We prove an analog of Theorem~\ref{thmharmonic} in the
quasi-transitive setting,
under the assumption that the kernel $P$ is reversible.

\begin{cor}[(Quasi-transitive actions)]
\label{corquasi}
Let $\Gamma\leq\Aut(P)$ be a closed, amenable, unimodular subgroup
which acts \textit{quasi-transitively} on $V$.
Suppose also that $P$ is the kernel of a reversible Markov chain,
and there exists a connected graph $G=(V,E)$ on which
$\Gamma$ acts by automorphisms, and that for $x \in V$,
%
\begin{equation}
\label{eqbddstep3} \sum_{y \in V} P(x,y)
\dist(x,y)^2 < \infty,
\end{equation}
where $\dist$ is the path metric on $G$.
Suppose also that
\[
p_* = \min \bigl\{ P(x,y) \dvtx \{x,y\} \in E \bigr\} > 0.
\]
Then there exists a Hilbert space $\mathcal H$,
and a nonconstant $\Gamma$-equivariant $P$-harmonic mapping from $V$
into $\mathcal H$.
\end{cor}

\begin{pf}
Let $x_0, x_1, \ldots, x_L \in V$ be a complete set of representatives
of the orbits of $\Gamma$.
Let $V_0 = \Gamma x_0$, and let $P_0$ be the induced\vadjust{\goodbreak} transition kernel
of the $P$-random walk watched on $V_0$,
that is, $P_0(x,y)~=~\pr[X_{\tau} = y  \mid  X_0 = x]$ for $x,y
\in V_0$, where
$\tau= \min\{ t \geq1 \dvtx X_t \in V_0 \}$.
Since $P$ is reversible and $\Gamma$ acts transitively on $V_0$, we
see that $P_0$ is symmetric.

Letting $D = \max_{i \neq j} \dist(x_i, x_j)$, we define a new graph
$G_0=(V_0,E_0)$
by having an edge $\{x,y\} \in E_0$ whenever:
\begin{longlist}[(1)]
\item[(1)] $\{x,y\} \in E$ and $x,y \in V_0$ or
\item[(2)] there exists a path $x = v_0, v_1, \ldots, v_k = y$ in $G$
with $v_1, \ldots, v_{k-1} \notin V_0$ and $k \leq2D$.
\end{longlist}
Let $\dist_0$
denote the path metric on $G_0$.
It is clear that $\Gamma$ acts on $G_0$ by automorphisms, and also that
$p_*(G_0) = \min\{ P_0(x,y) \dvtx \{x,y\} \in E_0 \} \geq(p_*)^{2D} > 0$.

Now, since every point $x \notin V_0$ has $\dist(x,V_0) \leq D$, we
see that actually
$\dist(x,y) \eqsim \dist_0(x,y)$ for all $x,y \in V_0$ (up to a
multiplicative
constant depending on $D$). Furthermore, this implies that
for any $x \in V$ there exists $y \in V_0$ with $\sum_{i=0}^{D}
P^i(x,y) \geq(p_*)^D$, and
hence \eqref{eqbddstep3} implies that for every $x \in V_0$,
\[
\sum_{y \in V_0} P_0(x,y)
\dist(x,y)^2 < \infty,
\]
since
number of $P$-steps taking before returning to $V_0$ is dominated
by a geometric random variable.
This implies the same for $\dist_0$.

Thus we can apply Theorem~\ref{thmharmonic} to obtain a Hilbert space
$\mathcal H$ and a
nonconstant $\Gamma$-equivariant $P_0$-harmonic map $\Psi_0 \dvtx V_0
\to\mathcal H$.
We extend this to a mapping $\Psi: V \to\mathcal H$ by defining
$\Psi(x) = \E [ \Psi_0(W_0(x)) ]$ where $W_0(x)$ is the
first element
of $V_0$ encountered in the $P$-random walk started at $x$.
Note that $\Psi|_{V_0} = \Psi_0$, and $\Psi$ is again $\Gamma$-equivariant.
To finish the proof, it suffices to check that $\Psi$ is $P$-harmonic.

From the definition of $\Psi$, this is immediately clear for $x \notin
V_0$. Since $\Psi_0$
is $P_0$-harmonic, it suffices to check that for $x \in V_0$,
\[
\sum_{y \in V} P(x,y) \Psi(y) = \sum
_{y \in V_0} P_0(x,y) \Psi_0(y),
\]
but both sides are precisely $\E [\Psi_0(W_0(Z)) ]$,
where $Z$ is the random vertex arising from one step of the $P$-walk
started at $x$.
\end{pf}

\begin{cor}[(Harmonic functions on quasi-transitive graphs)]
\label{corqtharm}
If $G=(V,E)$ is an infinite, connected, amenable graph,
and $\Gamma\leq\Aut(G)$ is a quasi-transitive subgroup,
then $G$ admits a nonconstant $\Gamma$-equivariant harmonic mapping
into some Hilbert space.
\end{cor}

Now let $G=(V,E)$ be an infinite, connected, quasi-transitive, amenable graph.
The preceding construction of harmonic functions also gives escape
lower bounds
for the random walk on $G$.
By Theorem~\ref{thmamenable}, when $G$ is amenable, $\Gamma=\Aut
(G)$ is amenable and unimodular.
Let $R \subseteq V$ be a complete set of representatives from $\Gamma
\orbit V$.
Let $\mu$ be the Haar measure on $\Gamma$.
For $r \in R$, let $\mu_r = \mu(\Gamma_r)$, and
normalize $\mu$ so that
$\sum_{r \in R} \deg(r)/\mu_r = 1$.

\begin{cor}[(Random walks on quasi-transitive graphs)]
Let $\dist$ be the path metric on $G$,
and let $X_0$ have the distribution
$\pr[X_0 = r] = \deg(r)/\mu_r$
for $r \in R$. Then
%
\begin{equation}
\E \bigl[\dist(X_0, X_t)^2 \bigr] \geq
\frac{t}{\max \{ \mu_r \dvtx r \in R  \}},
\end{equation}
where $\{X_t\}$ denotes the simple random walk on $G$.
\end{cor}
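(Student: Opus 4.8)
The plan is to follow the route of Theorem~\ref{thm:potential} together with Corollary~\ref{cor:infinite-drift}, but carried out in the weighted Hilbert space $\ell^2(V,\deg)$, with inner product $\langle f,g\rangle_\deg = \sum_{x\in V}\deg(x)f(x)g(x)$. The reason for the weighting is that the simple random walk on a non-regular quasi-transitive graph is reversible but \emph{not} symmetric as a matrix, so the cited results do not apply verbatim; in $\ell^2(V,\deg)$, however, $P$ is self-adjoint, and positivity of $\langle(I\pm P)f,f\rangle_\deg$ shows it is a contraction there. Since $G$ is infinite and connected, $P$ is transient or null-recurrent, so $P^if\to 0$ in $\ell^2(V,\deg)$ for every $f$. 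By Theorem~\ref{thm:amenable}, $\Gamma=\Aut(G)$ is amenable and unimodular; amenability of $G$ gives a F\o lner sequence, hence for each $\theta>0$ a unit vector $f\in\ell^2(V,\deg)$ with $\|Pf-f\|_\deg\le\theta$. Feeding this into Lemma~\ref{lem:genQ} (with $\mathcal H=\ell^2(V,\deg)$, $Q=P$) produces a sequence $\{\psi_j\}\subseteq\ell^2(V,\deg)$ with
$$
\frac{\|(I-P)\psi_j\|_\deg^2}{\langle\psi_j,(I-P)\psi_j\rangle_\deg}\longrightarrow 0.
$$
This is the weighted, quasi-transitive analogue of Corollary~\ref{cor:infpsi}; one cannot take the F\o lner indicators directly, as both the numerator and denominator of the ratio tend to $0$, which is exactly the issue Lemma~\ref{lem:genQ} resolves.

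Next, fixing such a $\psi=\psi_j$, I would mimic the proof of Theorem~\ref{thm:potential} and define $\Psi:V\to L^2(\Gamma,\mu)$ by $\Psi(x):\sigma\mapsto\psi(\sigma x)$ (this is precisely the kind of map underlying the harmonic-map construction of Corollary~\ref{cor:quasi}, so no new idea is needed). Unimodularity makes $\mu(\Gamma_x)$ depend only on the $\Gamma$-orbit of $x$, so I write $\mu_x$ for it, with $\mu_x=\mu_{r}$ when $x\in\Gamma r$, $r\in R$. Two computations drive the argument. First, a \emph{Lipschitz bound}: for $\{x,y\}\in E$ one has $\|\Psi(x)-\Psi(y)\|^2\le\deg(x)\sum_z P(x,z)\|\Psi(x)-\Psi(z)\|^2$, and unfolding the integral over $\Gamma$ (the fibers of $\sigma\mapsto\sigma x$ have $\mu$-mass $\mu_x$) and then reweighting the resulting orbit sum by $\deg$ gives $\sum_z P(x,z)\|\Psi(x)-\Psi(z)\|^2=\mu_x\sum_{u\in\Gamma x}\sum_zP(u,z)(\psi(u)-\psi(z))^2\le \tfrac{2\mu_x}{\deg(x)}\langle\psi,(I-P)\psi\rangle_\deg$, whence $\|\Psi\|_{\Lip}^2\le 2(\max_{r\in R}\mu_r)\langle\psi,(I-P)\psi\rangle_\deg$ — this is where the factor $\max_r\mu_r$ enters. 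Second, a \emph{variation identity}: with $X_0$ distributed as $\pr[X_0=r]=\deg(r)/\mu_r$ on $R$, the same unfolding, using that $\deg(r)/\mu_r\cdot\mu_r=\deg(r)$ (the total mass being $1$ by the stipulated Haar normalization) and that $\deg$ is orbit-constant, collapses the per-orbit sums into a single sum over $V$:
$$
\E\big[\|\Psi(X_0)-\Psi(X_t)\|^2\big]=\sum_{x\in V}\deg(x)\,\E_x\big[|\psi(X_0)-\psi(X_t)|^2\big]=2\,\langle\psi,(I-P^t)\psi\rangle_\deg,
$$
the last equality being the standard reversible computation. Combining these with $\|\Psi\|_{\Lip}^2\,\E[\dist(X_0,X_t)^2]\ge\E[\|\Psi(X_0)-\Psi(X_t)\|^2]$ yields
$$
\E\big[\dist(X_0,X_t)^2\big]\ge\frac{1}{\max_{r\in R}\mu_r}\cdot\frac{\langle\psi,(I-P^t)\psi\rangle_\deg}{\langle\psi,(I-P)\psi\rangle_\deg}.
$$

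To finish, I would compare adjacent terms exactly as in Theorem~\ref{thm:potential}: writing $\langle\psi,(I-P^t)\psi\rangle_\deg=\sum_{i=0}^{t-1}\langle\psi,(I-P)P^i\psi\rangle_\deg$ and using $|\langle(I-P)\psi,P^i(I-P)\psi\rangle_\deg|\le\|(I-P)\psi\|_\deg^2$ (contractivity of $P$), one gets $\frac{\langle\psi,(I-P^t)\psi\rangle_\deg}{\langle\psi,(I-P)\psi\rangle_\deg}\ge t-\frac{t^2}{2}\cdot\frac{\|(I-P)\psi\|_\deg^2}{\langle\psi,(I-P)\psi\rangle_\deg}$. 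Taking $\psi=\psi_j$ and letting $j\to\infty$ so that the last ratio vanishes produces $\E[\dist(X_0,X_t)^2]\ge t/\max_{r\in R}\mu_r$, as claimed.

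The main obstacle is organizational rather than conceptual: one must keep the Haar normalization, the starting law $\deg(r)/\mu_r$, the orbit fiber masses $\mu_r$, and the $\deg$-reweighting of orbit sums all mutually consistent so that the Lipschitz estimate and the variation identity conspire to give precisely $1/\max_r\mu_r$, and not some other orbit-dependent constant — in particular the Lipschitz bound must be taken over the \emph{worst} orbit while the variation identity is an \emph{exact} equality after the degree reweighting. A secondary, purely technical point is that Lemma~\ref{lem:genQ}, Corollary~\ref{cor:infpsi}, and the fact $P^if\to0$ must be invoked in the weighted space $\ell^2(V,\deg)$, since the versions in the text are stated for symmetric $P$ on $\ell^2(V)$; all the proofs transfer with $\deg$-weights, using reversibility in place of symmetry.
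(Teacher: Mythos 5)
Your argument is correct, but it takes a genuinely different route from the paper's. The paper's proof is short and uses the machinery already in place: it takes the \emph{exact} harmonic map $\Psi$ supplied by Corollary \ref{cor:qtharm}, normalizes the orbit-averaged gradient to $1$, observes $\|\Psi\|_{\Lip}\le\max_r\sqrt{\mu_r}$, and then uses the martingale decomposition $\E\|\Psi(X_0)-\Psi(X_t)\|^2=\sum_{i=0}^{t-1}\E\|\Psi(X_{i+1})-\Psi(X_i)\|^2$ together with the fact — deduced from the Mass Transport Principle \cite[Cor.~3.5]{BLPS99} — that $[X_i]$ has the same law as $[X_0]$ for all $i$, so each increment contributes exactly $1$. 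You instead bypass Corollary \ref{cor:qtharm} entirely and rerun the Theorem~\ref{thm:potential}/Lemma~\ref{lem:genQ}/Corollary~\ref{cor:infpsi} chain from scratch in the weighted space $\ell^2(V,\deg)$, where the reversible kernel becomes self-adjoint, obtaining a sequence of \emph{approximately} harmonic $\psi_j$ and then combining the Lipschitz bound and the variation identity. A notable consequence is that your variation identity $\E\|\Psi(X_0)-\Psi(X_t)\|^2=2\langle\psi,(I-P^t)\psi\rangle_{\deg}$ absorbs the stationarity of $[X_i]$ automatically into the Haar unfolding plus $\deg$-reversibility, so you never invoke \cite[Cor.~3.5]{BLPS99} explicitly; on the other hand, you must check that Lemma~\ref{lem:genQ}, the approximate-invariant-vector construction, and the decay $P^if\to0$ all transfer to the weighted inner product — true (the lemma is Hilbert-space abstract; the F\o lner normalized indicators work after $\deg$-weighting since quasi-transitivity bounds $\deg$ above and below; the Cesàro vanishing follows from transience or null-recurrence applied first to finitely supported $f$), but this is exactly the bookkeeping the paper avoids by invoking its already-built exact harmonic map. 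Your Lipschitz bookkeeping and the identification of the $1/\max_r\mu_r$ constant are correct; the two approaches are essentially dual views — exact harmonic map plus mass transport versus approximate eigenfunctions in the reversible $L^2$ space.
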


\begin{pf}
Let $\Psi: V \to\mathcal H$ be the harmonic map guaranteed by
Corollary~\ref{corqtharm}
normalized so that
%
\begin{equation}
\label{eqqtgradient} \sum_{r \in R} \frac{1}{\mu_r}
\sum_{x : \{x,r\} \in E} \bigl\| \Psi (x)-\Psi(r)\bigr\| ^2 = 1.
\end{equation}
We have
\[
\| \Psi\| _{\Lip} \leq\max_{r \in R} \sqrt{\mu_r}.
\]

For every $r, \hat r \in R$, the mass transport principle~\cite{BLPS99}, Corollary
3.5,
implies that
\[
\frac{1}{\mu_r} \# \bigl\{ x \in\Gamma\hat r \dvtx \{r,x\} \in E \bigr\} =
\frac{1}{\mu_{\hat r}} \# \bigl\{ x \in\Gamma r \dvtx \{\hat r,x\} \in E \bigr\}.
\]

Thus if we use
the notation $[x]$ to denote the unique $r \in R$ such that $x \in
\Gamma r$,
then $[X_i]$ and $[X_0]$ are identically distributed for every $i \geq0$.
(This is also a special case of~\cite{LS99}, Theorem 3.1.)
It follows that
\begin{eqnarray*}
\| \Psi\| _{\Lip}^2 \E \bigl[\dist(X_0,
X_t)^2 \bigr] &\ge& \E \bigl\| \Psi(X_0)-
\Psi(X_t)\bigr\| ^2
\\
&=& \sum_{i=0}^{t-1} \E \bigl\|
\Psi(X_{i+1})-\Psi(X_{i})\bigr\| ^2
\\
&=& \sum_{i=0}^{t-1} \E\frac{1}{\deg(X_{i})}
\sum_{x : \{x,X_i\} \in
E} \bigl\| \Psi(x)-\Psi(X_i)
\bigr\| ^2
\\
&=& t \cdot\sum_{r \in R} \frac{\deg(r)}{\mu_r}
\frac{1}{\deg(r)} \sum_{x : \{x,r\} \in E} \bigl\| \Psi(x)-\Psi(r)
\bigr\| ^2
\\
&=& t,
\end{eqnarray*}
where in the second line we have used the fact that $\{\Psi(X_t)\}$ is
a martingale,
in the fourth line we have used equivariance of $\Psi$ and the fact
that each $[X_i]$ has the same distribution, and in the final line we
have used
\eqref{eqqtgradient}.
\end{pf}

%
%

\subsection{Groups without property \textup{(T)}}
\label{secwithoutT}

We now state a version of Theorem~\ref{thmharmonic} that
applies to the simple random walk on Cayley graphs
of groups without property~(T).
(We refer to~\cite{BHV08} for a thorough discussion of Kazhdan's
property (T).)
To this end, let $\Gamma$ be a finitely-generated group,
with finite, symmetric generating set $S \subseteq\Gamma$.
Let $P$ be the transition kernel of the simple random walk
on $\Gamma$ (with steps from $S$).

\begin{theorem}
\label{thmpropTphi}
Under the preceding assumptions, if $\Gamma$ does not have property~\textup{(T)},
there
exists a Hilbert space $\mathcal{H}$ and a unitary action $\pi$ of
$\Gamma$ on $\mathcal{H}$ such that
\[
\inf_{\varphi\in\mathcal{H}} \frac{\| (I-P_{\dag})\varphi\| ^2_{\mathcal{H}}}{\langle\varphi,
(I-P_{\dag})\varphi\rangle_{\mathcal{H}}} = 0,
\]
where $P_{\dag} \dvtx \mathcal H \to\mathcal H$ is defined by
%
\begin{equation}
\label{eqPaction} P_{\dag} f = \frac{1}{|S|} \sum
_{\gamma\in S} \pi(\gamma) f.
\end{equation}
\end{theorem}

\begin{pf}
Since $\Gamma$ does not have property~(T),
it admits a unitary action $\pi$ on some Hilbert space $\mathcal{H}$ without
fixed points such that we can find, for every $\theta> 0$,
an $f \in\mathcal{H}$ with $\| f\| _{\mathcal H} = 1$ and $\| P_{\dag}
f - f \| _{\mathcal H} \leq\theta$.
Now, $P_{\dag}$ is self-adjoint and contractive by construction, thus
to apply Lemma~\ref{lemgenQ} (with $Q = P_{\dag}$) and reach our
desired conclusion,
we are left to show that
$\lim\frac{1}{k} \sum_{i=0}^{k-1} \langle P_{\dag}^i f, f \rangle= 0$.

Fix some nonzero $f \in\mathcal{H}$, and let
$\varphi_k = \frac{1}{k} \sum_{i=0}^{k-1} P_{\dag}^i f$. If
%
\begin{equation}
\label{eqnonzerolim} \lim_{k \to\infty} \frac{1}{k} \sum
_{i=0}^{k-1} \bigl\langle P_{\dag
}^i
f, f \bigr\rangle\neq0,
\end{equation}
then there exists a subsequence $\{k_{\alpha}\}$ and a nonzero
$\varphi\in\mathcal{H}$
such that $\varphi$ is a weak limit of $\{\varphi_{k_\alpha}\}$.

But in this case,
we claim that
%
\begin{equation}
\label{eqfp} P_{\dag} \varphi= \varphi,
\end{equation}
since for any $g \in\mathcal{H}$, we have
\begin{eqnarray*}
\langle P_{\dag} \varphi, g \rangle_{\mathcal H} &=&
\lim_{\alpha\to\infty} \Biggl\langle\frac{1}{k_{\alpha}} \sum
_{i=0}^{k_{\alpha}-1} P_{\dag}^{i+1} f, g
\Biggr\rangle_{\mathcal
H}
\\
&=& \lim_{\alpha\to\infty} \Biggl\langle\frac{1}{k_{\alpha}} \sum
_{i=0}^{k_{\alpha}-1} P_{\dag}^i f, g
\Biggr\rangle_{\mathcal H}
\\
&=& \langle\varphi, g \rangle_{\mathcal H},
\end{eqnarray*}
where we have used
the fact that $\lim_{\alpha\to\infty} \frac{1}{k_{\alpha}}
(P_{\dag}^{k_{\alpha}} f - f) = 0,$
since $P_{\dag}$ is contractive.
On the other hand, if \eqref{eqfp} holds, then we must have $\pi
(\Gamma) \varphi= \{\varphi\}$.
This follows by strict convexity since $P_{\dag} f$ is an average of
elements of~$\mathcal H$, all with norm $\| f\| _{\mathcal H}$.
Since $\pi$ does not have fixed points, we have reached a
contradiction, and~\eqref{eqnonzerolim} cannot hold, completing the proof.
\end{pf}

\begin{theorem}\label{thmharmonicNPT}
Let $\Gamma$ be a group with finite, symmetric generating set $S
\subseteq\Gamma$,
and let $P$ be the transition kernel of the simple random walk
on the Cayley graph $\mathsf{Cay}(G;S)$. If $\Gamma$ does not have
property \textup{(T)},
then there exists a Hilbert space $\mathcal{H}$ and a nonconstant
$\Gamma$-equivariant
$P$-harmonic mapping from $\Gamma$ into $\mathcal H$.
\end{theorem}

\begin{pf}
We write $\langle\cdot,\cdot\rangle$ and $\| \cdot\| $ for the inner
product and norm on $\mathcal H$.
Let $\{\psi_j\}_{j=0}^{\infty}$ be a sequence in $\mathcal H$ with
\[
\frac{\| (I-P_{\dag})\psi_j\| ^2}{\langle\psi_j, (I-P_{\dag})\psi_j\rangle} \to0.
\]
The existence of such a sequence is the content of Theorem~\ref{thmpropTphi}.

Define $\Psi_j \dvtx \Gamma\to\mathcal{H}$ by
\[
\Psi_j(g) = \frac{\pi(g) \psi_j}{2 \langle\psi_j, (I-P_{\dag
})\psi_j \rangle},
\]
where we recall the definition of $P_{\dag}$ from \eqref{eqPaction}.
Then, for every $j = 0,1,\ldots,$ and for any $g \in\Gamma$,
\begin{eqnarray*}
\frac{1}{|S|} \sum_{s \in S} \bigl\| \Psi_j(g)-
\Psi_j(gs)\bigr\| ^2 &=& \frac
{1}{|S|} \sum
_{s \in S} \frac{\| \pi(g) \psi_j - \pi(gs) \psi_j\| ^2}{2 \langle\psi_j, (I-P_{\dag})\psi_j \rangle}\\
 &=& \frac{1}{|S|} \sum
_{s \in S} \frac{\| \psi_j - \pi(s) \psi_j\| ^2}{2 \langle\psi_j, (I-P_{\dag})\psi_j \rangle} = 1,
\end{eqnarray*}
where we have used the fact that $\pi$ acts by isometries.

By the same token,
\[
\biggl\llVert \Psi_j(g)-\frac{1}{|S|} \sum
_{s \in S} \Psi_j(gs) \biggr\rrVert^2 =
\frac{\| (I-P_{\dag}) \psi_j\| ^2}{2 \langle\psi_j, (I-P_{\dag
}) \psi_j\rangle} \to0.
\]
Equipping $\Gamma$ with the word metric on $\mathsf{Cay}(G;S)$,
an application of Lemma~\ref{lemgencon} finishes the proof, just as
in Theorem~\ref{thmharmonic}.
\end{pf}

\section{The rate of escape}
\label{secestimates}

We now show how simple estimates derived from harmonic functions lead
to more detailed
information about the random walk. In fact, we will show that in
general situations, a hitting time
bound alone leads to some finer estimates.

\subsection{Graph estimates}

Consider again a symmetric,
stochastic matrix $\{P(x,y)\}_{x,y \in V}$ for some
at most countable
index set $V$.
Let $\Gamma\leq\Aut(P)$ be a subgroup
that acts transitively on $V$, and let $\dist$ be
a $\Gamma$-invariant metric on $V$.
Finally, let $\{X_t\}$ denote the random walk
with transition kernel $P$ started at some fixed point $x_0 \in V$.


For any $k \in\mathbb N$, let $H_k$ denote the first
time $t$ at which $\dist(X_0, X_{t}) \geq k$, and define
the function $h \dvtx \mathbb N \to\mathbb R$ by $h(k)=\mathbb E[H_k]$.
We start with the following simple lemma which employs
reversibility, transitivity and the triangle inequality.
It is based on an observation due to Mark Braverman;
see also the closely related inequalities of Babai~\cite{Babai91}.

\begin{lemma}\label{lemmark}
For any $T \geq0$, we have
\[
\E \dist(X_0, X_{T}) \geq\frac12 \max_{0 \leq t \leq T} \E
\bigl[\dist(X_0, X_t)-\dist(X_0,
X_1) \bigr] .
\]
\end{lemma}

\begin{pf}
Let $s' \leq T$ be such that
\[
\E \dist(X_0,X_{s'}) = \max_{0 \leq t \leq T} \E
\dist(X_0, X_t) .
\]
Then there exists an even time $s \in\{s',s'-1\}$ such that $\E
\dist(X_0,X_s) \geq\E [\dist(X_0,X_{s'})-d(X_0,X_1) ]$.

Consider $\{X_t\}$ and an identically distributed walk $\{\tilde{X}_t\}$
such that $\tilde{X}_t=X_t$ for $t \leq s/2$, and $\tilde{X}_t$ evolves
independently after time $s/2$.
By the triangle inequality, we have
\[
\dist(X_0, \tilde{X}_{T}) + \dist(\tilde{X}_{T},
X_{s}) \geq\dist (X_0, X_{s}) .
\]
But by reversibility and transitivity, each of $\dist(X_0, \tilde
{X}_T)$ and $\dist(\tilde{X}_T, X_s)$ are
distributed as $\dist(X_0, X_T)$. Taking expectations, the claimed
result follows.
\end{pf}

\begin{lemma}
\label{lemlindrift}
If $h(k) \leq T$, then
\[
\E \dist(X_0, X_{2T}) \geq\frac{k}{4} - \frac14
\E \dist (X_0,X_1) .
\]
\end{lemma}

\begin{pf}
Let $\alpha= \frac{1}{k} \max_{0 \leq t \leq2T} \E \dist(X_0, X_t)$.
First, observe that the triangle inequality implies
%
\begin{equation}
\label{eqfirst} \dist(X_0, X_{2T}) \geq
\1_{\{H_k \leq2T\}} \bigl(k - \dist (X_{H_k}, X_{2T}) \bigr) .
\end{equation}
By transitivity, we also have
\begin{eqnarray*}
\E \bigl[\1_{\{H_k \leq2T\}} \cdot\dist(X_{H_k}, X_{2T})
\bigr] &=& \E \bigl[\1_{\{H_k \leq2T\}} \cdot\dist(X_0,
X_{2T-H_k}) \bigr]\\
&\leq&\pr(H_k \leq2T) \alpha k .
\end{eqnarray*}
Thus taking expectations in \eqref{eqfirst} yields
\[
\E \dist(X_0, X_{2T}) \geq\pr(H_k \leq2T)
(1-\alpha)k \geq\tfrac 12 (1-\alpha)k ,\vadjust{\goodbreak}
\]
recalling our assumption that $\mathbb E[H_k] \leq T$.
On the other hand, Lemma~\ref{lemmark} shows that
\[
\E \dist(X_0, X_{2T}) \geq\tfrac{1}{2} \bigl(
\alpha k - \E \bigl[\dist(X_0,X_1) \bigr] \bigr).
\]
Averaging these two inequalities yields the desired result.
\end{pf}

Using transitivity, one can also prove a small-ball occupation
estimate, directly from information
on the hitting times.

\begin{theorem}\label{thmsmallball}
Assume that $\dist(X_0, X_1) \leq B$ almost surely.
Consider any $k \in\mathbb N$.
If $h(k) \leq T$, then for any $\varepsilon> 0$, we have
\[
\frac{1}{T} \sum_{t=0}^{T} \pr
\bigl[\dist(X_0, X_t) \leq \varepsilon k \bigr] \leq O(
\varepsilon+B/k) .
\]
\end{theorem}

\begin{pf}
Let $\alpha= 3 \varepsilon k$.
We define a sequence of random times $\{t_i\}_{i=0}^{\infty}$ as follows.
First, $t_0=0$. We then define $t_{i+1}$ as the smallest time $t > t_i$
such that
$\dist(X_t, X_{t_j}) \geq\alpha$ for all $j \leq i$. We put
$t_{i+1}=\infty$ if no
such $t$ exists.
Observe that the set $\{ X_{t_i} \dvtx t_i < \infty\}$ is $\alpha
$-separated in the metric $\dist$.

We then define, for each $i \geq0$,
the quantity
\[
\tau_i = \cases{ 0, & \quad$\mbox{if } t_i > 2T$,
\vspace*{2pt}
\cr
\# \bigl\{ t \in[t_i, t_i + 2T] \dvtx
\dist(X_t, X_{t_i}) \leq\alpha/3 \bigr\}, &\quad $
\mbox{otherwise.}$ }
\]

Since the set $\{ X_{t_i} \dvtx t_i < \infty\}$ is $\alpha$-separated,
the $(\alpha/3)$-balls about
the centers $X_{t_i}$ are disjoint, and thus we have the inequality
%
\begin{equation}
\label{eqineq1} 4T \geq\sum_{i=0}^{\infty}
\tau_i ,
\end{equation}
where the latter sum is over only finitely many terms.

We can also calculate for any $i \geq0$,
\[
\E[\tau_i] \geq\pr(t_i \leq2T) \cdot\E[
\tau_0],
\]
using transitivity.
Now, we have $t_i \leq2T$ if $\dist(X_0, X_T) \geq(B+\alpha) i$,
and thus
for $i \leq k/(B+\alpha)$, we have
\[
\pr(t_i > 2T) \leq\pr(H_k > 2T) \leq\frac{\E[H_k]}{2T}
\leq\frac 12 .
\]
We conclude that for $i \leq k/(B+\alpha)$, we have $\E[\tau_i] \geq
\frac12 \E[\tau_0]$. Combining this with~\eqref{eqineq1} yields
\[
\E[\tau_0] \leq O \bigl((\varepsilon+B/k) T \bigr) .
\]
\upqed\end{pf}

\begin{remark}
In the next section, we prove analogs of the preceding statements for
Hilbert space-valued
martingales. One can then use harmonic functions to obtain such results
in the graph setting.\vadjust{\goodbreak}
The results in this section are somewhat more general though, since
they give
general connections between the function $h(k)$ and other properties of
the chain.
For instance, for every $j \in\mathbb N$, there are groups where $h(k)
\asymp k^{1/(1-2^{-j})}$
as $k \to\infty$~\cite{Ershler01}.
\end{remark}

\subsection{Martingale estimates}

We now prove analogs of Lemma~\ref{lemlindrift} and Theorem~\ref
{thmsmallball}
in the setting of martingales.

Let $\{M_t\}$ be a martingale taking values in some Hilbert space
$\mathcal H$,
with respect to the filtration $\{\mathcal F_t\}$. Assume
that
$\E [\| M_{t+1}-M_t\| ^2\mid\mathcal F_t ] \geq1$ for every
$t \geq0$, and
there exists a $B \geq1$ such that for every $t \geq0$, we have
$|M_{t+1}-M_t| \leq B$ almost surely.

\begin{lemma}[(Martingale hitting times)]
\label{lemmghit}
For $R \geq0$, let
$\tau$ be the first time $t$ such that $\| M_t-M_0\|  \geq R$.
Then, $\E(\tau) \leq(R+B)^2$.
\end{lemma}

\begin{pf}
Applying the optional stopping theorem (see, e.g.,~\cite{LPW09}, Corollary~17.7) to the submartingale $\| M_t-M_0\| ^2-t$, with the stopping
time $\tau$,
we see that
$\E(\tau) \leq\E(\| M_{\tau}-M_0\| ^2)$, and $\E(\| M_{\tau}-M_0\| ^2) \leq(R+B)^2.$
\end{pf}

The following simple estimate gives a lower bound on the $L^1$ rate of
escape for a martingale.

\begin{lemma}[($L^1$ estimate)]
\label{lemL1mg}
For every $T \geq0$, we have $\E \llVert M_0-M_T\rrVert  \geq\sqrt {T/8} - B/2$.
\end{lemma}

\begin{pf}
Let $\tau\geq0$ be the first time such that $\| M_0-M_{\tau}\|  \geq
\sqrt{T/2}-B$,
and let $\tau' = \min(\tau,T)$. First, by Lemma~\ref{lemmghit} and
Markov's inequality, we have
\[
\E \| M_0-M_{\tau'}\|  \geq\pr(\tau\leq T) \cdot (\sqrt
{T/2}-B ) \geq\sqrt{\frac{T}{8}} - \frac{B}{2} .
\]
Then, since $\| M_0-M_t\| $ is a submartingale and $\tau'$ and $T$ are
stopping times with $\tau' \leq T$, we have
\[
\E \| M_0-M_T\|  \geq\E \| M_0-M_{\tau'}
\|  .
\]
\upqed\end{pf}

Now we prove an analog of Theorem~\ref{thmsmallball} in the
martingale setting,
beginning with a preliminary lemma.

\begin{lemma}\label{lemyuval}
For $R \geq R' \geq0$, let $p_R$ denote the probability
that $\| M_t\|  \geq\| M_0\|  + R$ occurs before $\| M_t\|  \leq\| M_0\| -R'$.
Then $p_R \geq\frac{R'}{2R+B}$.
\end{lemma}

\begin{pf}
Let $\tau\geq0$ be the first time at which $\| M_{\tau}\|  \geq\|
M_0\|  + R$ or $\| M_{\tau}\|  \leq\| M_0\| -R'$.\vadjust{\goodbreak}
Since $\| M_t\| -\| M_0\| $ is a submartingale, the optional stopping
theorem implies
\[
0 \leq\E \bigl(\| M_{\tau}\| -\| M_0\|  \bigr) \leq p_R
(R+B) - (1-p_R) R' \leq p_R(2R+B) -
R' .
\]
Rearranging yields the desired result.
\end{pf}

From this, we can prove a general occupation time estimate.

\begin{lemma}[(Martingale occupation times)]
\label{lemmgocc}
Suppose that $M_0=0$. Then for every $\eps\geq B/\sqrt{T}$ and $T
\geq1$, we have
\[
\frac{1}{T} \sum_{t=0}^T \pr \bigl[
\| M_t\|  \leq\eps\sqrt{T} \bigr] \leq O(\eps) .
\]
\end{lemma}

\begin{pf}
Let $h=\lceil2 (3 \varepsilon\sqrt{T} + B)^2\rceil$.
Let $\mathcal B$ denote the ball of radius $\varepsilon\sqrt{T}$
about $0$ in $\mathcal H$.
For $t \leq T-h$,
let $p_t$ denote the probability that $M_t \in\mathcal B$,
but $M_{t+h}, M_{t+h+1}, \ldots, M_{T} \notin\mathcal B$.
We first show that for every such $t$,
%
\begin{equation}
\label{eqwantshow} p_t \geq\frac{\varepsilon}{40} \cdot\pr \bigl(
\| M_t\|  \leq \varepsilon\sqrt{T} \bigr) .
\end{equation}

To this end, we prove three bounds. First,
%
\begin{eqnarray}
\label{eqfone} &&\pr \bigl(\exists i \leq h  \mbox{ such that } \| M_{t+i}\|
\geq2 \varepsilon\sqrt{T} | M_t \in\mathcal B \bigr)
\nonumber
\\[-8pt]
\\[-8pt]
\nonumber
& &\qquad\geq\pr \bigl(\exists i \leq h \mbox{ such that } \|  M_{t+i}-M_t
\|  \geq3 \varepsilon\sqrt{T} | M_t \in\mathcal B \bigr) \geq\tfrac12 ,
\end{eqnarray}
where the latter bound follows from Markov's inequality and Lemma \ref
{lemmghit}.

Next, observe that for $R \geq\varepsilon\sqrt{T}$, Lemma \ref
{lemyuval} gives
%
\begin{eqnarray}
\label{eqsecondstep}\qquad&& \pr \bigl(\| M_s\|  \geq R \mbox{ occurs before }
\| M_s\|  \leq \varepsilon\sqrt{T} \mbox{ for } s \geq t+i |
\| M_{t+i}\|  \geq2 \varepsilon\sqrt{T} \bigr)
\nonumber
\\[-8pt]
\\[-8pt]
\nonumber
&&\qquad\geq\frac{\varepsilon\sqrt {T}}{2R+B} .
\end{eqnarray}
Finally, the Doob--Kolmogorov maximal inequality implies that
%
\begin{equation}
\label{eqdoob} \pr \Bigl(\max_{0 \leq r \leq T} \| M_s-M_{s+r}
\|  > R | \mathcal F_s \Bigr) \leq\frac{\E [\| M_s-M_{s+T}\| ^2\mid
\mathcal F_s ]}{R^2} =
\frac{T}{R^2} .
\end{equation}
Setting $R=2\sqrt{T}$, \eqref{eqsecondstep} and \eqref{eqdoob}
imply that for any time $t \geq0$, we have
\[
\pr \bigl(M_{t+i}, M_{t+i+1}, \ldots, M_T \notin
\mathcal B | \| M_{t+i}\|  \geq2 \varepsilon\sqrt{T} \bigr) \geq
\frac
{\varepsilon}{20} .
\]
Combining this with \eqref{eqfone} yields \eqref{eqwantshow}.

But we must have
\[
\sum_{t=0}^T p_t \leq h = O
\bigl(\varepsilon^2 T \bigr),
\]
by construction.
Thus \eqref{eqwantshow} yields
\[
\sum_{t=0}^T \pr \bigl[\| M_t\|
\leq\varepsilon\sqrt{T} \bigr] \leq O(\varepsilon T),
\]
completing the proof.
\end{pf}

\subsection{Applications}
\label{secapplications}

Combining the observations of the preceding section,
together with the existence of harmonic functions, yields
our claimed results on transitive graphs.
In particular, the following result, combined
with Theorem~\ref{thmharmonic}, proves Theorem~\ref{thmmaininfinite}.

\begin{theorem}
\label{thminfdiffuse}
Let $V$ be a countably infinite index set, and
let\break $\{P(x,y)\}_{x,y \in V}$ be a
stochastic, symmetric matrix.
Suppose that $\Gamma\leq\Aut(P)$
is a closed, amenable, unimodular subgroup that
acts transitively on $V$,
and
there exists a connected graph $G=(V,E)$ on which
$\Gamma$ acts by automorphisms.
Suppose further that for some $B > 0$, for all $x,y \in V$, we have
%
\begin{equation}
\label{eqbddstep2} P(x,y) \qquad\mbox{implies } \dist(x,y) \leq B ,
\end{equation}
where $\dist$ is the path metric on $G$.
Suppose also that
\[
p_* = \min \bigl\{ P(x,y) \dvtx \{x,y\} \in E \bigr\} > 0.
\]
If there exists a Hilbert space $\mathcal H$
and a nonconstant $\Gamma$-equivariant $\mathcal H$-valued
$P$-harmonic mapping, then the following holds.

Let $\{X_t\}$ denote the random walk with transition kernel $P$. For
every $t \geq0$, we
have the estimates
%
\begin{eqnarray}
\label{eqesc1} \E \bigl[\dist(X_0, X_t)^2
\bigr] &\geq& p_* t, \label{eqesc2}
\\
\E \bigl[\dist(X_0, X_t) \bigr] &\geq&
\frac{\sqrt{p_* t}}{24} - \frac32 B ,
\end{eqnarray}
and, for every $\varepsilon\geq1/\sqrt{T}$ and $T \geq4/p_*$,
%
\begin{equation}
\label{eqocc3} \frac{1}{T} \sum_{t=0}^{T}
\pr \bigl[\dist(X_0, X_t) \leq \varepsilon\sqrt{p_* T/B}
\bigr] \leq O(\varepsilon) .
\end{equation}
\end{theorem}

\begin{pf}
Let $\mathcal H$ and $\Psi: V \to\mathcal H$ be the Hilbert
space and nonconstant $\Gamma$-equivariant $P$-harmonic mapping.
Let $\| \cdot\|  = \| \cdot\| _{\mathcal H}$, and
normalize $\Psi$ so that
for every $x \in V$,
%
\begin{equation}
\label{eqpsinorm} \sum_{y \in V} P(x,y) \bigl\| \Psi(x)-
\Psi(y)\bigr\| ^2 = 1.
\end{equation}
Then $M_t = \Psi(X_t)$ is an $\mathcal H$-valued martingale
with $\E [\| M_{t+1}-M_t\| ^2 | \mathcal F_t ] = 1$ for every
$t \geq0$.

Furthermore, from \eqref{eqpsinorm},
we see that $\Psi$ is $\sqrt{1/p_*}$-Lipschitz
as a mapping from $(V,\dist)$ to $\mathcal H$.
Thus one has immediately the estimate
\[
\E \bigl[\dist(X_0,X_t)^2 \bigr] \geq p_*
\E \bigl[\| M_t-M_0\| ^2 \bigr] = p_* t .
\]

Now, for any $k \in\mathbb N$, let $H_k$ denote the first
time $t$ at which $\dist(X_0, X_{t})=k$, and define
the function $h \dvtx \mathbb N \to\mathbb R$ by $h(k)=\mathbb E[H_k]$.
Since $\Psi$ is $\sqrt{1/p_*}$-Lipschitz, Lemma~\ref{lemmghit}
applied to $\{M_t\}$ shows that for every $k \in\mathbb N$,
\[
h(k) \leq\frac{(k+B)^2}{p_*} .
\]
Combining this with Lemma~\ref{lemlindrift} yields~\eqref{eqesc1}.
Combining it with Theorem~\ref{thmsmallball} yields~\eqref{eqocc3}.
\end{pf}

Although we have proved a result about occupation times, we conjecture
that a stronger bound holds.

\begin{conjecture}
Suppose that $G$ is an infinite, transitive, connected, amenable graph
with degree $d$, and $\{X_t\}$ is the simple random walk on $G$.
Theorem~\ref{thminfdiffuse} shows that
for every $\varepsilon> 1/\sqrt{T}$ and $T \geq4 d$, we have
\[
\frac{1}{T} \sum_{t=0}^{T} \pr
\bigl(\dist(X_0, X_t) \leq\varepsilon \sqrt{T/d} \bigr)
\leq O(\eps) .
\]
We conjecture that this holds pointwise; that is, for every large
enough time~$t$, we have
\[
\pr \bigl(\dist(X_0, X_t) \leq\varepsilon\sqrt{t/d}
\bigr) \leq O(\eps) .
\]
\end{conjecture}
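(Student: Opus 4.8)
The plan is to deduce the pointwise estimate from the averaged one by exploiting reversibility, transitivity, and the equivariant harmonic map of Theorem~\ref{thm:harmonic}, treating the endpoint of the range of $\e$ separately via the known bound on return probabilities of transitive graphs. Throughout, fix $r=\e\sqrt t$, write $x_0=X_0$, and let $\Psi:V\to\mathcal H$ be the harmonic map normalized as in \eqref{eq:psinorm}, so that $M_t=\Psi(X_t)$ is an $\mathcal H$-valued martingale with unit conditional increments and $\Psi$ is $\sqrt{1/p_*}$-Lipschitz; thus $\pr[\dist(x_0,X_t)\le r]\le\pr[\|M_t-M_0\|\le r\sqrt{1/p_*}]$, and it would suffice to bound the latter by $O(\e)$. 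One should not expect this to follow from martingale concentration alone: writing $Z_t=t-\|M_t-M_0\|^2$, a mean-zero martingale whose quadratic variation is of order $t^2$, Azuma--Freedman only gives $\pr[Z_t\ge(1-O(\e^2))t]=O(1)$, not $o(1)$ as $\e\to0$.

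First I would dispose of the endpoint $\e\asymp t^{-1/2}$: there the event is $\{\dist(x_0,X_t)\le O(1)\}$, whose probability is at most $\sum_{y:\dist(x_0,y)\le O(1)}p_t(x_0,y)\le|B(x_0,O(1))|\,p_t(x_0,x_0)$, and since an infinite vertex-transitive graph has at least linear volume growth, classical isoperimetric/heat-kernel estimates give $p_t(x_0,x_0)=O(t^{-1/2})$, which is the required $O(\e)$. For general $\e$, the natural approach is to de-average the bound \eqref{eq:occ3}. For any $u\ge0$, conditioning on $\mathcal F_t$, using transitivity (so that $\pr_y[\dist(y,X_u)\le r]$ does not depend on $y$) and the triangle inequality,
$$
\pr[\dist(x_0,X_{t+u})\le 2r]\ \ge\ \pr[\dist(x_0,X_t)\le r]\cdot\pr[\dist(x_0,X_u)\le r].
$$
Summing over $u\in\{0,1,\dots,t\}$ and applying the averaged estimate of Theorem~\ref{thm:infdiffuse} on the interval $[0,2t]$ to the numerator yields
$$
\pr[\dist(x_0,X_t)\le\e\sqrt t]\ \le\ \frac{\sum_{u=0}^{t}\pr[\dist(x_0,X_{t+u})\le 2\e\sqrt t]}{\sum_{u=0}^{t}\pr[\dist(x_0,X_u)\le\e\sqrt t]}\ \le\ \frac{O(\e t)}{\sum_{u=0}^{t}\pr[\dist(x_0,X_u)\le\e\sqrt t]}.
$$

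The obstacle is the denominator. Since each step changes $\dist$ by at most $1$ we have $\E\,\dist(x_0,X_u)^2\le u$, so $\pr[\dist(x_0,X_u)\le\e\sqrt t]\ge\tfrac12$ for $u\le\e^2t/4$; hence the denominator is $\Omega(\e^2 t)$ and this argument delivers only $O(1/\e)$. Conversely, granting the conjectured pointwise bound one checks the denominator is $\Theta(\e^2 t)$, so no reweighting in the Markov-property step can push this route past $O(1/\e)$. What seems genuinely needed is either (i) a rough monotonicity of $t\mapsto\pr[\dist(x_0,X_t)\le r]$---e.g.\ comparability of its values at $t$ and $2t$---which together with \eqref{eq:occ3} over a dyadic block would give the pointwise bound at once; or (ii) a dimension-free local limit theorem for the walk (equivalently for the martingale $M_t$), uniform over transitive amenable graphs, asserting that $X_t$ has ``radial density $O(t^{-1/2})$'' near $x_0$. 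I expect (ii) to be the real difficulty: as noted in the introduction, in the super-polynomial-growth regime even on-diagonal heat-kernel technology is not sharp enough to yield Theorem~\ref{thm:maininfinite}, so a uniform local CLT is correspondingly out of reach; and (i) appears comparably hard, since $\pr[\dist(x_0,X_t)\le r]$ need not be exactly monotone, and the one monotonicity one does have, $p_{2t}(x_0,x_0)\!\downarrow$, does not transfer to balls of growing radius.
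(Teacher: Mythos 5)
The statement you were asked to prove is a \emph{conjecture} in the paper: Theorem~\ref{thm:infdiffuse} establishes only the averaged (occupation-time) version of the bound, and the authors explicitly leave the pointwise version open. So the fact that your argument does not close the gap is not a defect of your proposal --- the paper does not close it either --- and what you have written is best read as an (essentially correct) analysis of why the pointwise statement does not follow formally from the averaged one.

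Your reduction via the equivariant harmonic map, your disposal of the endpoint $\e \asymp t^{-1/2}$ via $p_t(x_0,x_0)=O(t^{-1/2})$ for infinite transitive graphs, and your Markov/triangle-inequality factorization
$$
\pr[\dist(x_0,X_{t+u})\le 2r]\ \ge\ \pr[\dist(x_0,X_t)\le r]\cdot\pr[\dist(x_0,X_u)\le r]
$$
are all sound, and you correctly see that after summing over $u$ the denominator is the obstruction. One slip worth flagging: the inequality ``$\E\,\dist(x_0,X_u)^2\le u$'' you invoke to lower bound the denominator is false in general --- the only deterministic bound from unit steps is $\dist(x_0,X_u)\le u$, and amenable groups can have strictly superdiffusive escape (e.g.\ lamplighter groups, where $\E\,\dist(X_0,X_t)$ grows like $t^{3/4}$; more generally $t^{1-2^{-k}}$ as cited in the paper). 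This makes the denominator potentially smaller than $\e^2 t$, so the route delivers something even weaker than your quoted $O(1/\e)$; likewise the claim that the denominator would be $\Theta(\e^2 t)$ under the conjectured pointwise bound is off --- summing $O(\e\sqrt{t/u})$ over $u\in[\e^2 t,t]$ already contributes $\Theta(\e t)$. None of this affects your qualitative conclusion: the de-averaging route fails, and the two candidate repairs you name (rough dyadic monotonicity of $t\mapsto\pr[\dist\le r]$, or a dimension-free radial local limit theorem for the Hilbert-valued martingale) are precisely the ingredients that are unavailable in the super-polynomial-growth regime, which is why the paper records the pointwise statement as a conjecture rather than a theorem.
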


Finally, we conclude with a theorem about finite graphs which,
in particular, yields Theorem~\ref{thmmainfinite}.

\begin{theorem}
Let $V$ be a finite index set
and
suppose that $\Aut(P)$ acts transitively on $V$,
and on the graph $G=(V,E)$ by automorphisms.
If
\[
p_* = \min \bigl\{ P(x,y) \dvtx \{x,y\} \in E \bigr\} > 0,
\]
and $\lambda< 1$ is the second-largest eigenvalue of $P$, then
for every $t \leq(1-\lambda)^{-1}$, we have
\begin{eqnarray*}
\E \bigl[\dist(X_0, X_t)^2 \bigr] &\geq&
p_* t/2 ,
\\
\E \bigl[\dist(X_0, X_t) \bigr] &\geq& \Omega({
\sqrt{p_* t}}) - B ,
\end{eqnarray*}
and, for every $\varepsilon> 0$ and $(1-\lambda)^{-1} \geq T \geq4/p_*$,
%
\begin{equation}
\label{eqocc32} \frac{1}{T} \sum_{t=0}^{T}
\pr \bigl[\dist(X_0, X_t) \leq \varepsilon\sqrt{p_* T/B}
\bigr] \leq O(\varepsilon) .
\end{equation}
\end{theorem}

\begin{pf}
Let $\psi: V \to\mathbb R$ be such that $P \psi= \lambda\psi$, and
define $\Psi: V \to\ell^2(\Aut(P))$
by
\[
\Psi(x) = \frac{ (\psi(\sigma x) )_{\sigma\in\Aut(P)}}{\sqrt{2
\langle\psi, (I-P)\psi\rangle}}.
\]
An argument as in \eqref{eqgrad} shows that $\| \Psi\| _{\Lip} \leq
\sqrt{1/p_*}$.

Now, observe that $\{\lambda^{-t} \Psi(X_t)\}$ is a martingale.
This follows from the fact that $\lambda^{-t} \psi(X_t)$ is a martingale,
which one easily checks.
\[
\E \bigl[\lambda^{-t-1} \psi(X_{t+1}) | X_t \bigr]
= \lambda^{-t-1} (P \psi) (X_t) = \lambda^{-t}
\psi(X_t).
\]
Note that $t \leq(1-\lambda)^{-1}$, hence the mapping
$x \mapsto\lambda^{-t} \Psi(x)$ is $O(\sqrt{1/p_*})$-Lipschitz,
and the same argument as in Theorem~\ref{thminfdiffuse} applies.
\end{pf}

\section*{Acknowledgments}
We thank Tonci Antunovic for detailed comments
on earlier drafts of this manuscript, and
Tim Austin for his suggestions toward obtaining equivariance
in Theorem~\ref{thmharmonic}. We also thank Anna \`{E}rshler, David
Fisher, Subhroshekhar Gosh, Gady Kozma, G\'abor Pete and
B\'alint Vir\'ag for useful discussions. Finally, we are grateful to an
anonymous referee
for many detailed suggestions.


%


\printaddresses

\end{document}